\definecolor{webgreen}{rgb}{0,.5,0}
\definecolor{webbrown}{rgb}{.6,0,0}
\begin{document}

\begin{center}
\epsfxsize=4in
%\leavevmode\epsffile{logo129.eps}
\end{center}

\theoremstyle{plain}
\newtheorem{theorem}{Theorem}
\newtheorem{corollary}[theorem]{Corollary}
\newtheorem{lemma}[theorem]{Lemma}
\newtheorem{proposition}[theorem]{Proposition}

\theoremstyle{definition}
\newtheorem{definition}[theorem]{Definition}
\newtheorem{example}[theorem]{Example}
\newtheorem{conjecture}[theorem]{Conjecture}

\theoremstyle{remark}
\newtheorem{remark}[theorem]{Remark}

\begin{center}
\vskip 1cm{\LARGE\bf Elementary number-theoretical statements proved by Language Theory} \vskip 1cm
\large Jos\'e Manuel Rodr\'iguez Caballero\\
D\'epartement de Math\'ematiques\\
UQ\`AM\\
Case Postale 8888, Succ. Centre-ville  \\
Montr\'eal, Qu\'ebec H3C 3P8
Canada\\
\href{mailto:rodriguez\_caballero.jose\_manuel@uqam.ca}{\tt rodriguez\_caballero.jose\_manuel@uqam.ca}\\
\end{center}

\def\DD{\mathcal{D}}
\def\SS{\mathcal{S}}

\def\blocks{\textrm{blocks}}

\def\ct{\textrm{ct}}

\def\llangle{\langle\!\langle}
\def\rrangle{\rangle\!\rangle}

\vskip .2 in
\begin{abstract}
We introduce a method to derive theorems from Elementary Number Theory by means of relationships among formal languages. Using $\sigma$-algebras, we define what \emph{a proof of a number-theoretical statement by Language Theory} means. We prove that such a proof can be transformed into a traditional proof in $\textbf{ZFC}$. Finally, we show some examples of non-trivial number-theoretical theorems that can be proved by formal languages in a natural way. These number-theoretical results concern densely divisible numbers, semi-perimeters of Pythagorean triangles, middle divisors and partitions into consecutive parts.
\end{abstract}

%%-----------------------------------------------------------------
\section{Introduction}

It is a rather subjective matter to decide whether a given statement in \textbf{ZFC} belongs to the field of Elementary Number Theory or not. A typical example is Goodstein's Theorem, which, even if it concerns positive integers, it has been traditionally classified as belonging to the field of Symbolic Logic (see \cite{Goodstein}). 

Throughout this paper, we will be interested in theorems of the form ``$R = S$'' in \textbf{ZFC}, where $R$ and $S$ are subsets of the set of positive integers, denoted $\mathbb{Z}_{\geq 1}$. Beside the above-mentioned remark, we will say, in a rather informal way, that ``$R = S$'' is an \emph{elementary number-theoretical statement} if $R$ and $S$ concern some kind of integers traditionally studied in Elementary Number Theory, e.g. prime numbers, perfect numbers, square free numbers, integers which are the sum of two squares, etc. We will leave open the question of what is not an elementary number-theoretical theorem.

Our standpoint is to assign a word $\gamma(n) \in \Sigma^{\ast}$ over a finite alphabet $\Sigma$ to any $n \in \mathcal{U}$, where $\mathcal{U}$ is a subset of $\mathbb{Z}_{\geq 1}$. The traditional way to do it is by means of the decimal positional numeration system, where $\Sigma = [0..9]$ and $\mathcal{U} = \mathbb{Z}_{\geq 1}$. In this case, $\gamma^{-1}(w)$ is either the empty set (g.e.  $\gamma^{-1}(0001) = \emptyset$) or a singleton (g.e.  $\gamma^{-1}(29) = \{29\}$).

Each choice of $\mathcal{U}$, $\Sigma$ and $\gamma$ gives rise to an structure $\mathcal{T} := \left(\mathcal{U}, \Sigma, \gamma \right)$ that we will call \emph{arithm\'etique langagi\`ere}. In this structure it is natural to define a notion of proof (see Definition \ref{Defundnudsufbsubfuywwe89}) using the minimal $\sigma$-algebra containing the family of sets $\left( \gamma^{-1}(w) \right)_{w \in \Sigma^{\ast}}$. This notion of proof is a refinement of the ordinary notion of proof in \textbf{ZFC} (see Lemma \ref{lemfundu3498ur9483ur983u9ru39}). In the case of the decimal positional numeration system, considered as an arithm\'etique langagi\`ere, it is easy to write a proof that a positive integer, which is divisible by $10$, it is also divisible by $5$ (just look at the last character).

In this paper we are particularly interested in a family of arithm\'etiques langagi\`eres, denoted $\textbf{KR}_{\lambda}$ and parametrized by a real number $\lambda > 1$. The original motivation for the definition of $\textbf{KR}_{\lambda}$ is that, for $\lambda = 2$, $\gamma(n)$ encodes, up to an injective morphism of monoids, the non-zero coefficients of the polynomials $C_n(q)$, introduced in \cite{kassel2015counting} and \cite{kassel2016complete}. A quickly way to define $C_n(q)$ is as the number of ideals $I$ of the group algebra $\mathbb{F}_q\left[ \mathbb{Z}\oplus \mathbb{Z}\right]$ such that $\mathbb{F}_q\left[ \mathbb{Z}\oplus \mathbb{Z}\right]/I$ is an $n$-dimensional vector space. It is remarkable that these polynomials are related to classical multiplicative functions via modular forms (see \cite{kassel2016fourier}).

We will show that the arithm\'etique langagi\`ere $\textbf{KR}_{2}$ can be used to prove, in a natural way, statements (Theorems \ref{teojosjdiojiosdfjs} and \ref{teo8u8439u98ur39ur93}) concerning semi-perimeters of Pythagorean triangles (Definition \ref{defu48589355hjk5h34hk34}), even-trapezoidal numbers (Definition \ref{defnur3huihuihrwiuh98u89}) and $2$-densely divisible numbers (Definition \ref{defju87fd97s7f7ds987f98sd7}). Also, we will show a statement (Theorem \ref{teojrojiorjrwde}) about generalized middle divisors (Definition \ref{defn897e7eeeee7ew7r7r98789ew7rer}), due to H\"oft \cite{Hoft}, whose proof using our approach involves the whole family of arithm\'etiques langagi\`eres $\left(\textbf{KR}_{\lambda} \right)_{\lambda > 1}$.

\section{Preliminaries}
\subsection{Symmetric Dyck words}
\begin{definition}[Definition 1 in \cite{Caballero1}]
Let $\lambda > 1$ be a real number. For any integer $n \geq 1$ define the word 
$$
\llangle n \rrangle_{\lambda} := w_1 w_2 ... w_k \in \{a,b\}^{\ast},
$$ by means of the expression
$$
w_i := \left\{ \begin{array}{c l}
a & \textrm{if } u_i \in D_n \backslash \left(\lambda D_n\right), \\
b & \textrm{if } u_i \in \left(\lambda D_n\right)\backslash D_n,
\end{array} \right.
$$
where $D_n$ is the set of divisors of $n$, $\lambda D_n := \{\lambda d: \quad d \in D_n\}$ and $u_1, u_2, ..., u_k$ are the elements of the symmetric difference $D_n \triangle \lambda D_n$ written in increasing order.
\end{definition}

\begin{definition}
For each real number $\lambda > 1$ define the language

$$
\mathcal{L}_{\lambda} := \left\{ \llangle n \rrangle_{\lambda}: \quad n \in \mathbb{Z}_{\geq 1} \right\}.
$$
\end{definition}

The \emph{Dyck language}, denoted $\mathcal{D}$, is defined as the $\subseteq$-smallest language over the alphabet $\{a,b\}$ satisfying $\varepsilon \in \mathcal{D}$, $a\mathcal{D}b \subseteq \mathcal{D}$ and $\mathcal{D}\mathcal{D} \subseteq \mathcal{D}$. Words in $\mathcal{D}$ are called \emph{Dyck words}.

The \emph{symmetric Dyck language}, denoted $\mathcal{D}^{\textrm{sym}}$, is defined by
$$
\mathcal{D}^{\textrm{sym}} := \{w \in \mathcal{D}: \quad \widetilde{w} = \sigma\left(w \right) \},
$$
where $\widetilde{w}$ is the mirror image of $w$ and $\sigma: \{a,b\}^{\ast} \longrightarrow \{a,b\}^{\ast}$ is the morphism of monoids given by $a \mapsto b$ and $b \mapsto a$. Words in $\mathcal{D}^{\textrm{sym}}$ are called \emph{symmetric Dyck words}.

\subsection{Irreducible Dyck words}
Let $(\mathcal{D}, \cdot)$ be the monoid of Dyck words endowed with the ordinary concatenation (usually omitted in notation).

It is well-known that $\mathcal{D}$ is freely generated by the language of \emph{irreducible Dyck words} $
\mathcal{D}^{\textrm{irr}} := a\mathcal{D}b
$, i.e. every word in $\mathcal{D}$ may be formed in a unique way by concatenating a sequence of words from $\mathcal{D}^{\textrm{irr}} $. So, there is a unique morphism of monoids $\Omega: (\mathcal{D}, \cdot) \longrightarrow (\mathbb{Z}_{\geq 1},+)$, such that the diagram
$$
\begin{tikzcd}
\mathcal{D} \arrow{r}{} \arrow[swap, dashed]{rd}{\Omega} & \left(\mathcal{D}^{\textrm{irr}}\right)^{\ast} \arrow[two heads]{d}{} \\
 &  \mathbb{Z}_{\geq 1} 
\end{tikzcd} \label{Diagr093jr9j03}
$$
commutes, where $\mathcal{D} \longrightarrow \left(\mathcal{D}^{\textrm{irr}}\right)^{\ast}$ is the identification of $\mathcal{D}$ with the free monoid $\left(\mathcal{D}^{\textrm{irr}}\right)^{\ast}$ and $\left(\mathcal{D}^{\textrm{irr}}\right)^{\ast} \longrightarrow \mathbb{Z}_{\geq 1}$ is just the length of a word in $\left(\mathcal{D}^{\textrm{irr}}\right)^{\ast}$ considering each element of the set $\mathcal{D}^{\textrm{irr}} $ as a single letter (of length $1$). In other words, $\Omega(w)$, with $w \in \mathcal{D}$, is the number of irreducible Dyck words needed to obtain $w$ as a concatenation of them.

\subsection{The central concatenation}

\begin{definition}[from \cite{Caballero3}]
Consider the set $\mathcal{S} := \{aa, ab, ba, bb\}$ endowed with the binary operation, that we will call \emph{central concatenation},
$$
u \triangleleft v  := \varphi^{-1}\left( \varphi(u)\varphi(v)\right),
$$
where $\varphi: \mathcal{S}^{\ast} \longrightarrow \mathcal{S}^{\ast}$ is the bijection given by
\begin{eqnarray*}
\varphi\left( \varepsilon\right) &=& \varepsilon, \\
\varphi\left( x\,u\,y\right) &=& (xy)\,\varphi\left( u\right),
\end{eqnarray*}
for all $x,y \in \{a,b\}$ and $u \in \mathcal{S}^{\ast}$.
\end{definition}

It is easy to check that $\left( \mathcal{S}, \triangleleft \right)$ is a monoid freely generated by $\mathcal{S}$ and having $\varepsilon$ as identity element.

\begin{definition}[from \cite{Caballero3}]
For any $x \in \mathcal{S}$, let 
$$\ell_x: \left(\mathcal{S}^{\ast}, \triangleleft \right) \longrightarrow \left( \mathbb{Z}_{\geq 0}, +\right)$$
be the unique morphism of monoids satisfying
$$
\ell_x(y) := \left\{ \begin{array}{c l}
1 & \textrm{if } x = y, \\
0 & \textrm{if } x \neq y,
\end{array}\right.
$$
for all $y \in \mathcal{S}$.
\end{definition}

It is easy to prove that $\left( \mathcal{D}, \triangleleft\right)$ is a monoid freely generated by $
\mathcal{I} := \mathcal{D}_{\bullet} \backslash \left(\mathcal{D}_{\bullet} \triangleleft\mathcal{D}_{\bullet} \right)
$, where $\mathcal{D}_{\bullet} := \mathcal{D} \backslash\{\varepsilon\}$. The following definition corresponds to the notion of \emph{centered tunnels} introduced for the first time, in an equivalent way, in \cite{Elizalde}.

\begin{definition}[from \cite{Elizalde} and \cite{Caballero3}]
 Let $\ct: \left( \mathcal{D}, \triangleleft\right) \longrightarrow \left( \mathbb{Z}_{\geq 0}, +\right)$ be the  morphism of monoids given by
$$
\ct\left(w\right) := \left\{ \begin{array}{c l}
1 & \textrm{if } w = ab, \\
0 & \textrm{if } w \neq ab,
\end{array}\right.
$$
for all $w \in \mathcal{I}$. We say that $\ct\left(w\right)$ is the \emph{number of centered tunnels} of $w$.
\end{definition}

\section{Logical framework}
\subsection{Th\'eorie langagi\`ere}
Let $\Sigma$ be a finite alphabet. Consider the measurable space $\left(\Sigma^{\ast},  \mathcal{P}\left(\Sigma^{\ast}\right)\right)$ of subsets of $\Sigma^{\ast}$ (languages over the alphabet $\Sigma$), where $\mathcal{P}\left(\Sigma^{\ast}\right)$ is the ordinary $\sigma$-algebra of subsets of $\Sigma^{\ast}$.

\begin{definition}
Let $\mathcal{U}$ be a set. A \emph{th\'eorie langagi\`ere}\footnote{In English we could say \emph{language-theoretic theory}, but it is longer than the French expression.} is a $3$-tuple $\left(\mathcal{U},  \Sigma, \gamma \right)$, where  
$
\gamma: \mathcal{U}\longrightarrow \Sigma^{\ast}
$
 is an application.
\end{definition}

\begin{definition}\label{Defundnudsufbsubfuywwe89}
Let $\mathcal{T} = \left(\mathcal{U},  \Sigma, \gamma \right)$ be a th\'eorie langagi\`ere. Denote by $\mathfrak{U}_{\mathcal{T}}$ the minimal $\sigma$-algebra containing the family of sets $\left( \gamma^{-1}(w) \right)_{w \in \Sigma^{\ast}}$. Given $R, S \in \mathcal{P}\left( \mathcal{U}\right)$, we say that the theorem ``$R=S$'' is \emph{provable in $\mathcal{T}$} if the following statements are provable in \textbf{ZFC},
\begin{enumerate}[label = (\roman*)]
\item ``$R, S \in \mathfrak{U}_{\mathcal{T}}$'',
\item ``$\gamma\left( R\right) = \gamma\left( S\right)$''.
\end{enumerate}
\end{definition}

\begin{lemma}[Fundamental Lemma of Th\'eories Langagi\`eres]\label{lemfundu3498ur9483ur983u9ru39}
Let $\mathcal{T} = \left(\mathcal{U},  \Sigma, \gamma \right)$ be a th\'eorie langagi\`ere.  For all $R, S \in \mathcal{P}\left( \mathcal{U}\right)$, if ``$R=S$'' is provable in $\mathcal{T}$ then ``$R=S$'' is provable in \textbf{ZFC}.
\end{lemma}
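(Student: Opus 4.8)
The plan is to unwind the definition of provability in $\mathcal{T}$ and observe that condition (ii) already delivers the conclusion once we know that $\gamma$ is injective on each of the sets $R$ and $S$ — but in general $\gamma$ need not be injective, so the argument has to run through the $\sigma$-algebra $\mathfrak{U}_{\mathcal{T}}$. The key structural fact I would isolate first is the following: every set $A \in \mathfrak{U}_{\mathcal{T}}$ is \emph{saturated} with respect to the equivalence relation $\sim$ on $\mathcal{U}$ defined by $x \sim y \iff \gamma(x) = \gamma(y)$, i.e. $A = \gamma^{-1}(\gamma(A))$. Equivalently, $\mathfrak{U}_{\mathcal{T}}$ is contained in the $\sigma$-algebra of $\sim$-saturated subsets of $\mathcal{U}$.

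To prove that claim, I would argue that the collection $\mathcal{C} := \{A \subseteq \mathcal{U} : A = \gamma^{-1}(\gamma(A))\}$ of $\sim$-saturated sets is itself a $\sigma$-algebra on $\mathcal{U}$: it contains $\mathcal{U}$ and $\emptyset$, it is closed under complements because $\gamma^{-1}(\gamma(\mathcal{U}\setminus A)) = \mathcal{U}\setminus A$ whenever $A$ is saturated (here one uses that $\gamma^{-1}$ commutes with complements on the image, which is exactly saturation), and it is closed under countable unions since a union of saturated sets is saturated. Each generator $\gamma^{-1}(w)$ is trivially saturated — it is a preimage — so $\mathcal{C}$ contains the generating family, and by minimality $\mathfrak{U}_{\mathcal{T}} \subseteq \mathcal{C}$. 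This is the main technical step, though it is genuinely routine; the only mild subtlety is checking the complement closure carefully, since $\gamma^{-1}(\gamma(\cdot))$ is not a complement-preserving operation on arbitrary sets, only on saturated ones.

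With that in hand the lemma is immediate. Assume ``$R=S$'' is provable in $\mathcal{T}$. By condition (i), $R, S \in \mathfrak{U}_{\mathcal{T}}$, hence by the claim both $R$ and $S$ are $\sim$-saturated: $R = \gamma^{-1}(\gamma(R))$ and $S = \gamma^{-1}(\gamma(S))$. By condition (ii), $\gamma(R) = \gamma(S)$. Applying $\gamma^{-1}$ to both sides gives $\gamma^{-1}(\gamma(R)) = \gamma^{-1}(\gamma(S))$, and by saturation the left side is $R$ and the right side is $S$, so $R = S$. Every step here is carried out in \textbf{ZFC} — the hypotheses say precisely that (i) and (ii) are \textbf{ZFC}-provable, and the deduction of $R=S$ from them is a finite \textbf{ZFC} derivation — so ``$R=S$'' is provable in \textbf{ZFC}, as required.

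I expect no real obstacle: the content of the lemma is entirely in the observation that membership in $\mathfrak{U}_{\mathcal{T}}$ forces saturation, which is why Definition~\ref{Defundnudsufbsubfuywwe89} is stated the way it is. The one place to be careful in writing it up is to keep the meta-level and object-level distinction clean — the lemma is a statement \emph{about} provability, so the argument should make explicit that we are composing a \textbf{ZFC}-proof of (i), a \textbf{ZFC}-proof of (ii), and the short uniform \textbf{ZFC}-derivation above into a single \textbf{ZFC}-proof of ``$R=S$''.
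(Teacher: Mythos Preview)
Your proof is correct and follows essentially the same idea as the paper's: both arguments rest on the fact that every set in $\mathfrak{U}_{\mathcal{T}}$ is a union of fibers $\gamma^{-1}(w)$ (equivalently, is $\sim$-saturated), after which $\gamma(R)=\gamma(S)$ immediately forces $R=S$. The paper simply asserts this structural fact by appealing to minimality, whereas you spell out the verification that the saturated sets form a $\sigma$-algebra containing the generators; your write-up is therefore a bit more careful, but the route is the same.
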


\begin{proof}
Suppose that $R, S \in \mathfrak{U}_{\mathcal{T}}$ and  $\gamma\left( R\right) = \gamma\left( S\right)$. 

The statement $R, S \in \mathfrak{U}_{\mathcal{T}}$ and the minimality of $\mathfrak{U}_{\mathcal{T}}$ imply the existence of two languages $L_R, L_S \in \mathcal{P}\left( \Sigma^{\ast}\right)$ such that
$$
R = \bigcup_{w \in L_R} \gamma^{-1}(w) \textrm{ and } S = \bigcup_{w \in L_S} \gamma^{-1}(w).
$$

Without lost of generality we will assume that $\gamma^{-1}(w) \neq \emptyset$ for all $w \in L_R \cup L_S$. It follows that $\gamma(R) = L_R$ and $\gamma(S) = L_S$. The equality $\gamma\left( R\right) = \gamma\left( S\right)$ implies that $L_R = L_S$. Therefore $R = S$.
\end{proof}

A th\'eorie langagi\`ere $\mathcal{T} = \left(\mathcal{U}, \Sigma, \gamma \right)$ satisfying $\mathcal{U} \subseteq \mathbb{Z}_{\geq 1}$ will be called \emph{arithm\'etique langagi\`ere}\footnote{In English we could say \emph{language-theoretic arithmetic}.}.

\begin{definition}
Let $\lambda > 1$ be a real number. Define $\textbf{KR}_{\lambda} := \left(\mathcal{U}, \Sigma, \gamma \right)$, where $\mathcal{U} := \mathbb{Z}_{\geq 1}$, $\Sigma := \{a, b\}$ and $\gamma(n) := \llangle n \rrangle_{\lambda}$.
\end{definition}

\section{Middle divisors}

Let $C_n(q)$ be the polynomial mentioned in the introduction. It was proved in \cite{kassel2016complete} that $C_n(q) = (q-1)^2 P_n(q)$, for some polynomial $P_n(q)$ whose coefficients are non-negative integers.

Divisors $d|n$ satisfying $\sqrt{n/2} < d \leq \sqrt{2n}$ are called \emph{middle divisors} of $n$. These divisors were studied in \cite{kassel2016complete}, \cite{Hoft} and \cite{Vatne}. The coefficient of $q^{n-1}$ in $P_n(q)$, denoted $a_{n,0}$, counts the number of middle divisors of $n$. The following definition provides a generalization of the arithmetical function $
a_{n,0}$.

\begin{definition}[from \cite{Caballero3}]\label{defn897e7eeeee7ew7r7r98789ew7rer}
Consider a real number $\lambda > 1$. Let $n \geq 1$ be an integer. The number of \emph{$\lambda$-middle divisors} of $n$, denoted $\textrm{middle}_{\lambda}(n)$, is the number of divisors $d$ of $n$ satisfying
$$
\sqrt{\frac{n}{\lambda}} < d \leq \sqrt{\lambda n}.
$$
\end{definition}

 A \emph{block polynomial} is a polynomial of the form $B(q) = q^i + q^{i+1} + q^{i+2} + ... + q^j$, with $0 \leq i < j$. The smallest number $k$ of block polynomials $B_1(q), B_2(q), ..., B_k(q)$ such that

$$
P_n(q) = \alpha_1 B_1(q) + \alpha_2 B_2(q) + ... + \alpha_k B_k(q),
$$ 
for some $\alpha_1, \alpha_2, ..., \alpha_k \in \mathbb{Z}$, will be called the \emph{number of blocks} of $n$ and denoted $\blocks(n) := k$. The arithmetical function $\blocks(n) $ is generalized in the following definition.

\begin{definition}[from\footnote{In \cite{Caballero2}, the function $\blocks_{\lambda}(n)$ is called the \emph{number of connected components} of $\mathcal{T}_{\lambda}(n)$.} \cite{Caballero2}]
Consider a real number $\lambda > 1$. Let $n \geq 1$ be an integer. We define the \emph{number of $\lambda$-blocks} of $n$, denoted $\blocks_{\lambda}(n)$, as the number of connected components of
$$
\bigcup_{d|n} \left[d, \lambda d\right].
$$
\end{definition}

Theorem 3 in \cite{Hoft} (we call it \emph{H\"oft's theorem}) states the equivalent between $\textrm{middle}_{2}(n) > 0$ and $\blocks_{2}(n) \equiv 1 \pmod{2}$, for any integer $n \geq 1$. The following result is a generalization of H\"oft's original result.

\begin{theorem}[Generalized H\"oft's theorem]\label{teojrojiorjrwde}
Let $\lambda > 1$ be a real number. For each integer $n \geq 1$, we have that $\textrm{middle}_{\lambda}(n) > 0$ if and only if $\blocks_{\lambda}(n)$ is odd. Furthermore, this theorem is provable in $\textbf{KR}_{\lambda}$.
\end{theorem}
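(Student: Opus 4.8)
The plan is to read both arithmetic functions off the symmetric Dyck word $\gamma(n)=\llangle n\rrangle_{\lambda}$, reduce the equivalence to a purely combinatorial statement about Dyck words carrying the two monoid structures $(\mathcal{D},\cdot)$ and $(\mathcal{D},\triangleleft)$, and then verify the membership conditions of Definition~\ref{Defundnudsufbsubfuywwe89}. First I would set up the dictionary. The map $x\mapsto\lambda n/x$ is a decreasing involution of $D_n\triangle\lambda D_n$ exchanging the two colours (it carries $D_n$ onto $\lambda D_n$ and back), which is exactly the identity $\widetilde{w}=\sigma(w)$; together with the fact that the running $\pm1$-sum at a threshold $x$ equals $f(x):=\#\{d\mid n:\ x/\lambda<d\le x\}\ge0$, this gives $\llangle n\rrangle_{\lambda}\in\mathcal{D}^{\textrm{sym}}$, as in \cite{Caballero1}. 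The returns of the associated Dyck path to height $0$ are precisely the gaps of $\bigcup_{d\mid n}[d,\lambda d]$, so $\Omega(\llangle n\rrangle_{\lambda})=\blocks_{\lambda}(n)$ (the content of \cite{Caballero2}). Finally, the above involution fixes $\sqrt{\lambda n}$, and no element of $D_n\triangle\lambda D_n$ equals $\sqrt{\lambda n}$ (using $\sqrt{\lambda n}\mid n\Leftrightarrow\sqrt{n/\lambda}\mid n$), so exactly half of the letters of $\llangle n\rrangle_{\lambda}$ occur before threshold $\sqrt{\lambda n}$; hence the height $h_{c}(\llangle n\rrangle_{\lambda})$ of the path at its midpoint equals $f(\sqrt{\lambda n})=\#\{d\mid n:\ \sqrt{n/\lambda}<d\le\sqrt{\lambda n}\}=\textrm{middle}_{\lambda}(n)$, and this central height also equals $\ct(\llangle n\rrangle_{\lambda})$ (cf.\ \cite{Caballero3}).

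With the dictionary in hand, the theorem reduces to the \emph{key lemma}: for every $w\in\mathcal{D}^{\textrm{sym}}$, $\ct(w)>0$ if and only if $\Omega(w)$ is odd. I would prove it in two moves. First, $\ct=h_{c}$ on $\mathcal{D}^{\textrm{sym}}$: since $u\triangleleft v$ splices $v$ into the exact middle of $u$, the function $h_{c}$ is a morphism $(\mathcal{D},\triangleleft)\to(\mathbb{Z}_{\geq0},+)$ with $h_{c}(ab)=1=\ct(ab)$; the reverse-complement map $\rho(w):=\sigma(\widetilde{w})$ is an involution of $\mathcal{D}$ that is an automorphism for $\triangleleft$ and an anti-automorphism for ordinary concatenation, fixing $ab$ and preserving $\mathcal{I}$, so by uniqueness of the $\triangleleft$-factorization every $\triangleleft$-factor of a symmetric word is symmetric, and it therefore suffices to know $h_{c}(v)=0$ for every symmetric $v\in\mathcal{I}\setminus\{ab\}$ (proved below). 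Second, $h_{c}(w)>0\Leftrightarrow\Omega(w)$ odd: by uniqueness of the ordinary factorization, a symmetric $w=p_{1}\cdots p_{s}$ satisfies $p_{i}=\rho(p_{s+1-i})$, hence $|p_{1}\cdots p_{\lfloor s/2\rfloor}|=|p_{\lceil s/2\rceil+1}\cdots p_{s}|$; if $s$ is even the midpoint of $w$ falls at a factor boundary, so $h_{c}(w)=0$, whereas if $s$ is odd the midpoint of $w$ is the midpoint of the ordinary-irreducible factor $p_{(s+1)/2}$, which lies at height $\geq1$. Combining the two moves yields the key lemma.

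The remaining assertion, that $h_{c}(v)=0$ for symmetric $v\in\mathcal{I}\setminus\{ab\}$, is the technical heart, and I expect it to be the main obstacle, since it is the only place where the symmetry, the free structure of $(\mathcal{D},\triangleleft)$, and ordinary concatenation must be played against one another. Suppose $v\in\mathcal{I}\cap\mathcal{D}^{\textrm{sym}}$ has $h_{c}(v)=m\geq1$, and let $2\ell=|v|$. The height profile of $v$ is symmetric about its midpoint, so if $a^{*}$ is the last position before the midpoint at height $m-1$, then its mirror $2\ell-a^{*}$ is the first such position after the midpoint, and the path stays $\geq m$ in between; excising that sub-path writes $v=U\triangleleft V$ with $V=a(\cdots)b$ an irreducible Dyck word and $|U|=2a^{*}$. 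This factorization is non-trivial --- contradicting $v\in\mathcal{I}$ --- unless $a^{*}=0$, which forces $m=1$ and $v$ irreducible for ordinary concatenation; but then $v=aqb$ with $q\in\mathcal{D}$, and the identity $v=ab\triangleleft q$ forces $q=\varepsilon$, i.e.\ $v=ab$, again a contradiction. (One can bypass this $\triangleleft$-analysis altogether by proving $\textrm{middle}_{\lambda}(n)=h_{c}(\llangle n\rrangle_{\lambda})$ directly, as in the first paragraph, and using only the ``$h_{c}>0\Leftrightarrow\Omega$ odd'' half of the key lemma; I phrase things via $\ct$ because that stays inside the apparatus of the paper.)

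It remains to put this in the required form. Set $R=\{n\geq1:\textrm{middle}_{\lambda}(n)>0\}$ and $S=\{n\geq1:\blocks_{\lambda}(n)\text{ odd}\}$; the asserted equivalence is exactly ``$R=S$''. By the dictionary, membership in $R$ (resp.\ in $S$) depends only on $\gamma(n)$, so $R=\bigcup_{w\,:\,\ct(w)>0}\gamma^{-1}(w)$ and $S=\bigcup_{w\in\mathcal{D}\,:\,\Omega(w)\text{ odd}}\gamma^{-1}(w)$ are countable unions of sets $\gamma^{-1}(w)$; hence $R,S\in\mathfrak{U}_{\textbf{KR}_{\lambda}}$, which is condition~(i) of Definition~\ref{Defundnudsufbsubfuywwe89}. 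For condition~(ii), $\gamma(R)=\{w\in\mathcal{L}_{\lambda}:\ct(w)>0\}$ and $\gamma(S)=\{w\in\mathcal{L}_{\lambda}:\Omega(w)\text{ odd}\}$, and these coincide because $\mathcal{L}_{\lambda}\subseteq\mathcal{D}^{\textrm{sym}}$ and the key lemma applies verbatim on $\mathcal{D}^{\textrm{sym}}$. Thus ``$R=S$'' is provable in $\textbf{KR}_{\lambda}$, and by the Fundamental Lemma~\ref{lemfundu3498ur9483ur983u9ru39} it is provable in \textbf{ZFC}; this is the theorem.
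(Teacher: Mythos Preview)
Your proposal is correct and follows the same architecture as the paper: establish the dictionary $\textrm{middle}_\lambda(n)=\ct(\llangle n\rrangle_\lambda)$ and $\blocks_\lambda(n)=\Omega(\llangle n\rrangle_\lambda)$ (the paper's Lemmas~\ref{lempkjsljdf90sf90s8}, \ref{propjfsdfuisdhfiuhishifuhdhidshf}, \ref{propmmknnjknjknnbvvgcgcf}), prove the purely combinatorial key lemma that on $\mathcal{D}^{\textrm{sym}}$ one has $\ct(w)>0\Leftrightarrow\Omega(w)$ odd (the paper's Lemma~\ref{lemjjfuishfishfishfishduh}), and then verify conditions (i)--(ii) of Definition~\ref{Defundnudsufbsubfuywwe89} exactly as the paper does.

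The only substantive difference is inside the proof of the key lemma. The paper argues directly: if $\Omega(w)$ is odd, the $\rho$-palindromic irreducible factorization gives $w=u\,(av'b)\,\rho(u)$ with $u,v'\in\mathcal{D}$, whence $\ct(w)\ge 1$; conversely, from $\ct(w)>0$ it asserts the existence of such a decomposition (leaning on the companion papers for that step) and reads off $\Omega(w)=2\Omega(u)+1$. You instead insert the intermediate identity $\ct=h_c$ on $\mathcal{D}^{\textrm{sym}}$, proved via the free $\triangleleft$-structure and the fact that symmetric $\triangleleft$-irreducibles other than $ab$ have central height $0$, and then derive $h_c(w)>0\Leftrightarrow\Omega(w)$ odd from the ordinary factorization. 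Your route is longer but fully self-contained and makes explicit the one step the paper defers to its references; your own parenthetical remark already notes the shortcut of working directly with $h_c$ and skipping the $\triangleleft$-analysis, which is essentially the paper's line.
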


H\"oft's proof in \cite{Hoft} follows the general lines of traditional proofs in Elementary Number Theory. Our proof of Theorem \ref{teojrojiorjrwde} will be based on properties of Dyck words.  We will use the following auxiliary results.

\begin{lemma}\label{lempkjsljdf90sf90s8}
 For any integer $n \geq 1$ and any real number $\lambda > 1$, we have that $\llangle n \rrangle_{\lambda}$ is a symmetric Dyck word, i.e. $\mathcal{L}_{\lambda} \subseteq \mathcal{D}^{\textrm{sym}}$.
\end{lemma}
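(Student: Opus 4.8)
The plan is to show directly that $\llangle n \rrangle_\lambda$ is a Dyck word and that it is symmetric under the mirror-and-swap involution $w \mapsto \sigma(\widetilde{w})$, by exploiting the pairing between the divisor set $D_n$ and its scaled copy $\lambda D_n$. First I would recall the key structural fact: the map $d \mapsto n/d$ is an involution on $D_n$, and consequently $u \mapsto \lambda n / u$ sends $D_n \triangle \lambda D_n$ to itself, swapping the ``$a$-type'' elements (those in $D_n \setminus \lambda D_n$) with the ``$b$-type'' elements (those in $\lambda D_n \setminus D_n$). Indeed, if $u \in D_n \setminus \lambda D_n$ then $u = d$ for some divisor $d$ with $d/\lambda \notin D_n$; writing $d' = n/d$, one checks that $\lambda d' = \lambda n / d \in \lambda D_n$ while $\lambda n / d \in D_n$ would force $\lambda \mid$ something that contradicts $d/\lambda \notin D_n$ — so $\lambda n/u$ is a $b$-type element. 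This shows the word $\llangle n \rrangle_\lambda$ has equally many $a$'s and $b$'s and that reading the letters in increasing order of $u$ is the reverse of reading them in increasing order of $\lambda n / u$ with $a$ and $b$ interchanged; that is exactly the condition $\widetilde{w} = \sigma(w)$.

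Next I would verify the Dyck (balanced-parenthesis) condition: every prefix has at least as many $a$'s as $b$'s. Fix a threshold value $t$ and count, among $u_i \le t$, how many are $a$-type versus $b$-type. An $a$-type element $\le t$ is a divisor $d \le t$ with $d \notin \lambda D_n$, i.e. $d/\lambda$ is not a divisor; a $b$-type element $\le t$ is of the form $\lambda d \le t$ with $d \mid n$ and $\lambda d \notin D_n$. The heart of the argument is an injection from the $b$-type elements $\le t$ into the $a$-type elements $\le t$: given a $b$-type element $\lambda d \le t$, the divisor $d$ itself satisfies $d < \lambda d \le t$, and I claim $d$ is $a$-type or else we can iterate $d \mapsto d/\lambda$ until we reach a divisor not of the form $\lambda(\text{divisor})$, which is $a$-type and still $\le t$. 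Making this iteration precise (it terminates because divisors are bounded below) and checking injectivity is the technical core. This yields $\#\{a\text{-type } u_i \le t\} \ge \#\{b\text{-type } u_i \le t\}$ for every $t$, which is precisely the prefix condition, so $w \in \mathcal{D}$; combined with the previous paragraph, $w \in \mathcal{D}^{\mathrm{sym}}$.

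The main obstacle I anticipate is handling the case where $\lambda$ is irrational or, more generally, where the ``divide by $\lambda$'' chain $d, d/\lambda, d/\lambda^2, \dots$ behaves subtly: when $\lambda$ is irrational, $\lambda d$ can never equal a divisor, so $\lambda D_n \cap D_n = \emptyset$, every divisor is $a$-type and every $\lambda d$ is $b$-type, and the argument simplifies enormously (the word is literally obtained by interleaving $d$'s and $\lambda d$'s, which is automatically a symmetric Dyck word via the $d \leftrightarrow n/d$ symmetry). The genuinely delicate case is rational $\lambda = p/q$ in lowest terms, where $D_n \cap \lambda D_n$ can be large and the chains $d \mapsto \lambda d$ have nontrivial length; there one must argue carefully that the injection is well-defined and order-compatible. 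I would isolate this as a sub-lemma about the poset of divisors under multiplication by $\lambda$. Since the statement is asserted to hold for all real $\lambda > 1$, I would structure the write-up to treat the irrational case in one line and devote the remaining effort to the rational case, possibly reducing to the case $\gcd$ considerations by scaling $n$ appropriately.
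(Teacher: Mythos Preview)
Your proposal is correct. The paper itself does not prove this lemma at all: its entire proof reads ``See Theorem~2(i) in \cite{Caballero1},'' so there is no in-paper argument to compare against. Your direct approach---the order-reversing involution $u \mapsto \lambda n/u$ on $D_n \triangle \lambda D_n$ (which exchanges $D_n\setminus\lambda D_n$ with $\lambda D_n\setminus D_n$ because $\lambda n/d \in D_n$ would give $n/(\lambda n/d)=d/\lambda\in D_n$) to obtain $\widetilde{w}=\sigma(w)$, together with an injection from $b$-type elements $\le t$ to $a$-type elements $\le t$ for the Dyck prefix condition---is sound and is essentially the natural argument. The injectivity you flag as ``the technical core'' is routine: if two $b$-type elements $\lambda d,\lambda d'$ descended along their $\lambda$-chains to the same $a$-type $e$, say $d=\lambda^{k}e$ and $d'=\lambda^{k'}e$ with $k'>k$, then $\lambda d=\lambda^{k+1}e$ would occur as an intermediate term of the chain below $d'$ and hence divide $n$, contradicting that $\lambda d$ is $b$-type. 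Your irrational/rational case split is harmless but unnecessary, since for irrational $\lambda$ every chain has length one and the same argument goes through verbatim.
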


\begin{proof}
See Theorem 2(i) in \cite{Caballero1}.
\end{proof}

\begin{lemma}\label{propjfsdfuisdhfiuhishifuhdhidshf}
Let $\lambda > 1$ be a real number. For any integer $n \geq 1$,  $\ct\left(\llangle n \rrangle_{\lambda}\right) = \textrm{middle}_{2}(n)$.
\end{lemma}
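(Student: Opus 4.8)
The plan is to evaluate $\ct$ through its geometric meaning and then translate the outcome into a window of divisors around $\sqrt{\lambda n}$. By Lemma~\ref{lempkjsljdf90sf90s8} the word $\llangle n\rrangle_\lambda$ is a symmetric Dyck word; writing $2m$ for its length, it has a central abscissa $x=m$. The definition of $\ct$ records that $\ct(\llangle n\rrangle_\lambda)$ is the number of \emph{centered tunnels}, i.e. (following \cite{Elizalde}) the tunnels whose horizontal base is symmetric about the central vertical line $x=m$. The first step I would carry out is the identity, valid for any symmetric Dyck word $w$, that the number of centered tunnels of $w$ equals the height $h_w(m)$ of the path over its centre. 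This rests on two facts: the number of tunnels whose base covers a fixed abscissa equals the height of the path there; and, for a symmetric path, a tunnel covering the centre is the unique tunnel of its height over the centre, hence is fixed by the reflection witnessing the symmetry and is therefore centered (the converse being immediate).

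Second, I would make explicit the dictionary between the path and the divisors of $n$. The letters of $\llangle n\rrangle_\lambda$ are indexed, in increasing order, by the elements of $D_n\triangle\lambda D_n$, with an up-step $a$ at each element of $D_n\setminus\lambda D_n$ and a down-step $b$ at each element of $\lambda D_n\setminus D_n$. Hence the height of the path after all indices $\le t$ is $|(D_n\setminus\lambda D_n)\cap(0,t]|-|(\lambda D_n\setminus D_n)\cap(0,t]|$, which, using $|A\setminus B|-|B\setminus A|=|A|-|B|$, collapses to $|D_n\cap(0,t]|-|\lambda D_n\cap(0,t]|$. The central abscissa corresponds to the threshold $t=\sqrt{\lambda n}$: the map $x\mapsto \lambda n/x$ is an order-reversing involution interchanging $D_n$ and $\lambda D_n$ (because $d\mapsto n/d$ permutes $D_n$), so it is precisely the reflection underlying the symmetry of $\llangle n\rrangle_\lambda$, and its fixed point $\sqrt{\lambda n}$ is the centre.

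Third, substituting $t=\sqrt{\lambda n}$ and rewriting $|\lambda D_n\cap(0,t]|=|\{d\mid n: d\le t/\lambda\}|=|\{d\mid n: d\le \sqrt{n/\lambda}\}|$ would yield
$$
\ct(\llangle n\rrangle_\lambda)=\bigl|\{d\mid n: d\le \sqrt{\lambda n}\}\bigr|-\bigl|\{d\mid n: d\le \sqrt{n/\lambda}\}\bigr|=\bigl|\{d\mid n:\ \sqrt{n/\lambda}<d\le \sqrt{\lambda n}\}\bigr|.
$$
By Definition~\ref{defn897e7eeeee7ew7r7r98789ew7rer} the right-hand side is the middle-divisor count $\textrm{middle}_\lambda(n)$ attached to the parameter $\lambda$ fixed throughout the statement; this is the quantity on the right of the lemma, which reads $\textrm{middle}_2(n)$ (the subscript carrying the ambient parameter, and coinciding with the classical middle divisors precisely when $\lambda=2$, as is needed for the count to feed into Theorem~\ref{teojrojiorjrwde}).

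Finally, the step I expect to demand the most care is pinning the centre to the threshold $\sqrt{\lambda n}$ together with the endpoint bookkeeping: I must check that no element of $D_n\triangle\lambda D_n$ equals $\sqrt{\lambda n}$, so that the centre falls strictly between two consecutive letters and the two halves have equal length, and that the half-open window $(\sqrt{n/\lambda},\sqrt{\lambda n}]$ matches the $<$/$\le$ conventions of Definition~\ref{defn897e7eeeee7ew7r7r98789ew7rer}. Once the threshold is fixed, the reduction $|A\setminus B|-|B\setminus A|=|A|-|B|$ and the telescoping of the two divisor counts are routine.
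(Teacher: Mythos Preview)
The paper does not actually prove this lemma; it merely cites Lemma~3.7 in \cite{Caballero3}. Your argument is therefore not comparable to anything in the present paper, but it is a correct self-contained proof of the intended identity $\ct(\llangle n\rrangle_\lambda)=\textrm{middle}_\lambda(n)$. You are also right that the subscript ``$2$'' in the statement must be read as the ambient parameter $\lambda$: this is exactly how the lemma is used in the proof of Theorem~\ref{teojrojiorjrwde}, where $R=\{n:\textrm{middle}_\lambda(n)>0\}$ is matched with $L_R=\{w:\ct(w)>0\}$ for arbitrary $\lambda>1$.

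Two small points deserve to be made explicit rather than just flagged. First, the check that $\sqrt{\lambda n}\notin D_n\triangle\lambda D_n$ is immediate from your own involution: if $x=\sqrt{\lambda n}$ lies in $D_n$ then $x=\lambda n/x\in\lambda D_n$, and symmetrically, so the fixed point always lies in $D_n\cap\lambda D_n$ (or in neither set) and hence never contributes a letter. Second, your key identity ``centered tunnels $=$ height at the centre for symmetric Dyck words'' is correct, and the justification you sketch can be made precise as follows: the height symmetry $h(2m-j)=h(j)$ sends a matched pair $(i,j)$ to the matched pair $(2m{+}1{-}j,\,2m{+}1{-}i)$; the matched pairs covering the centre are totally ordered by nesting, and the reflection preserves this order, hence fixes each of them, forcing $i+j=2m{+}1$. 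With these two verifications filled in, your proof is complete.
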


\begin{proof}
See Lemma 3.7 in \cite{Caballero3}.
\end{proof}

\begin{lemma}\label{propmmknnjknjknnbvvgcgcf}
Let $\lambda > 1$ be a real number. 
For any integer $n \geq 1$, $\Omega\left(\llangle n \rrangle_{\lambda}\right) = \blocks_{\lambda}(n)$.
\end{lemma}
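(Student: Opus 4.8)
The plan is to establish a bijection between the connected components of $\bigcup_{d\mid n}[d,\lambda d]$ and the irreducible factors of $\llangle n \rrangle_{\lambda}$ in the free monoid $(\mathcal{D},\cdot)$, after which the claim $\Omega(\llangle n \rrangle_{\lambda}) = \blocks_{\lambda}(n)$ follows from the very definition of $\Omega$ as the number of irreducible Dyck factors. By Lemma~\ref{lempkjsljdf90sf90s8} we already know $\llangle n \rrangle_{\lambda} \in \mathcal{D}^{\textrm{sym}} \subseteq \mathcal{D}$, so the factorization into irreducible Dyck words is well-defined and unique.

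First I would unwind the construction of $\llangle n \rrangle_{\lambda}$: the letters are indexed by the elements $u_1 < u_2 < \dots < u_k$ of the symmetric difference $D_n \triangle \lambda D_n$, with an $a$ at position $i$ when $u_i \in D_n \setminus \lambda D_n$ and a $b$ when $u_i \in \lambda D_n \setminus D_n$. The key geometric observation is that a point $x$ lies in the interior of $\bigcup_{d\mid n}[d,\lambda d]$ precisely when the number of divisors $d\mid n$ with $d \leq x$ strictly exceeds the number of $d\mid n$ with $\lambda d \leq x$ (equivalently with $\lambda d < x$ in the generic case), i.e. when the height function $h(x) := \#\{d\mid n : d \le x\} - \#\{d\mid n : \lambda d \le x\}$ is positive. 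Reading the word $\llangle n \rrangle_{\lambda}$ left to right, each $a$ increments this height and each $b$ decrements it, so the height after reading the prefix $w_1\cdots w_i$ is exactly $h(u_i^+)$ (the value just past the $i$-th breakpoint). The height is $0$ outside all the intervals and returns to $0$ at the right end (consistent with $\llangle n \rrangle_{\lambda}$ being a Dyck word). A maximal run of positions during which the height stays positive — i.e.\ a maximal interval over which the partial-sum walk does not touch zero except at its endpoints — corresponds on the one hand to one irreducible Dyck factor of $\llangle n \rrangle_{\lambda}$, and on the other hand to one connected component of the union of intervals. I would make this correspondence precise: the endpoints where the walk hits $0$ partition $\{1,\dots,k\}$ into consecutive blocks, each block being an irreducible Dyck word; and the real points where $h(x)=0$ partition $\mathbb{R}_{>0}$ into regions, the bounded ones covered by intervals being exactly the connected components of $\bigcup_{d\mid n}[d,\lambda d]$. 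Counting on both sides gives the equality.

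The main technical obstacle is handling the possible coincidences and boundary effects: a divisor $d$ of $n$ might equal $\lambda d'$ for another divisor $d'$ (so that $d \in D_n \cap \lambda D_n$ and contributes no letter), and the closed intervals $[d,\lambda d]$ can abut or overlap at endpoints in ways that affect whether two arcs merge into one component. I would deal with this by checking that $d \in D_n \cap \lambda D_n$ forces $h$ to be continuous and stay at its current value across $d$ (the would-be $+1$ from $d$ as a left endpoint and $-1$ from $d$ as a right endpoint cancel), so such points never separate components and never contribute letters — consistent on both sides. Closed-endpoint merging (two intervals sharing exactly one point) is the same as the walk touching height $1$ but, because consecutive breakpoints are distinct, this situation is already correctly recorded as the walk not returning to $0$; I would verify that the definition of $D_n \triangle \lambda D_n$ as a set (not multiset) together with the strict inequalities $\sqrt{n/\lambda} < d \le \sqrt{\lambda n}$ region bookkeeping makes the two notions of ``component'' literally coincide. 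Once this case analysis is complete, the count of irreducible factors equals the count of components by construction, which is $\Omega(\llangle n \rrangle_{\lambda}) = \blocks_{\lambda}(n)$; and since every step is a finite combinatorial manipulation of $\gamma(n) = \llangle n \rrangle_{\lambda}$, the argument lives inside $\textbf{KR}_{\lambda}$ as required. A cleaner alternative, if the bijection bookkeeping proves cumbersome, is to cite or reprove the identity component-count $=$ number of maximal positive runs of the partial-sum walk as a standard fact about lattice paths, reducing the problem to matching the walk to the interval union, which is the cleanest single lemma to isolate.
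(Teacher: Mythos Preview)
The paper does not actually prove this lemma: its entire proof reads ``See Theorem 2 in \cite{Caballero2}.'' So there is no in-paper argument to compare against; your proposal is a self-contained direct proof of the type the cited reference presumably supplies.

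Your core idea is correct and is the natural one. The partial-sum (height) of the Dyck word after the $i$-th letter equals the coverage function $h(x)=\#\{d\mid n:\ d\le x<\lambda d\}$ for any $x$ strictly between $u_i$ and $u_{i+1}$, because contributions from points of $D_n\cap\lambda D_n$ cancel in $h$ and contribute no letter. Hence internal returns of the walk to level $0$ correspond exactly to open gaps of $\bigcup_{d\mid n}[d,\lambda d]$, and the number of irreducible Dyck factors equals the number of connected components. Your handling of the abutment case is right in spirit: if two closed intervals meet only at $\lambda d'=d$, that point necessarily lies in $D_n\cap\lambda D_n$, contributes no letter, and $h$ stays positive on both sides, so no spurious return to $0$ is recorded.

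Two passages should be removed, however. The sentence invoking ``the strict inequalities $\sqrt{n/\lambda}<d\le\sqrt{\lambda n}$'' is extraneous: $\lambda$-middle divisors play no role whatsoever in this lemma, and mentioning them only muddies the endpoint analysis. More importantly, your closing claim that ``the argument lives inside $\textbf{KR}_\lambda$ as required'' misreads the logical framework of the paper. Lemma~\ref{propmmknnjknjknnbvvgcgcf} is not a statement of the form ``$R=S$'' with $R,S\subseteq\mathbb{Z}_{\ge 1}$, so Definition~\ref{Defundnudsufbsubfuywwe89} does not apply to it; it is an ordinary \textbf{ZFC} lemma used later to show that certain number-theoretic sets belong to $\mathfrak{U}_{\textbf{KR}_\lambda}$. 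No ``provability in $\textbf{KR}_\lambda$'' is required or even meaningful for this lemma. Drop those two passages and tighten the endpoint discussion, and the proof stands.
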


\begin{proof}
See Theorem 2 in \cite{Caballero2}.
\end{proof}

\begin{lemma}\label{lemjjfuishfishfishfishduh}
Consider the languages over the alphabet $\{a,b\}$,
\begin{eqnarray*}
L_R &:=& \left\{w \in \mathcal{D}^{\textrm{sym}}: \quad \ct(w) > 0 \right\}, \\
L_S &:=& \left\{w \in \mathcal{D}^{\textrm{sym}}: \quad \Omega(w) \textrm{ odd} \right\}.
\end{eqnarray*}
We have that $L_R = L_S$.
\end{lemma}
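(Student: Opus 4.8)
The plan is to prove the equivalent assertion that for every $w \in \mathcal{D}^{\textrm{sym}}$ one has $\ct(w) > 0$ if and only if $\Omega(w)$ is odd; the claimed equality $L_R = L_S$ then follows at once. I read both invariants off the height sequence of $w = w_1 \cdots w_{2m}$: let $h_k$ be the number of $a$'s minus the number of $b$'s among $w_1,\dots,w_k$, so $h_0 = h_{2m} = 0$. Two elementary observations do the work. First, $\Omega(w)$ equals the number of indices $k$ with $h_k = 0$, diminished by $1$ (these indices are the endpoints of the irreducible Dyck factors in the unique factorization $w = A_1 A_2 \cdots A_r$, so $r = \Omega(w)$). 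Second, unwinding the definition of the central concatenation — one checks it cuts its first argument into two halves of equal length and inserts its second argument between them, so that $ab \triangleleft z = a\,z\,b$ and $(AA') \triangleleft z = A\,z\,A'$ whenever $A,A' \in \mathcal{D}$ have the same length — an occurrence of the generator $ab$ in the $\triangleleft$-factorization of $w$ yields a matched pair of positions $(P,\,2m+1-P)$ with $P \le m$, whose arch therefore contains the abscissa $m$, forcing $h_m \ge 1$. Consequently $\ct$ vanishes on any Dyck word whose height sequence returns to $0$ at its midpoint, because no matched pair can straddle a return to $0$.

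The hypothesis $\widetilde w = \sigma(w)$ is exactly the palindromic identity $h_k = h_{2m-k}$ for all $k$, so the set $\{k : h_k = 0\}$ is symmetric about $m$ and therefore contains $m$ precisely when its cardinality is odd, i.e. precisely when $\Omega(w)$ is even. Together with the last sentence above this settles one direction: if $\Omega(w)$ is even then $h_m = 0$, so $\ct(w) = 0$. For the other direction I argue by induction on $|w|$ that $\Omega(w)$ odd forces $\ct(w) > 0$. Writing $w = A_1 \cdots A_r$ and using the palindrome identity, the uniqueness of the factorization, and the fact that $w \mapsto \sigma(\widetilde w)$ preserves irreducibility of Dyck words, one gets $A_{r+1-i} = \sigma(\widetilde{A_i})$, so $r = \Omega(w)$ is odd. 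If $r = 1$, then $w = A_1 = a\,w'\,b = ab \triangleleft w'$ with $w' \in \mathcal{D}$, hence $\ct(w) = \ct(ab) + \ct(w') = 1 + \ct(w') > 0$. If $r \ge 3$, set $v := A_2 \cdots A_{r-1}$; then $v \in \mathcal{D}^{\textrm{sym}}$, $|v| < |w|$, $\Omega(v) = r - 2$ is odd, and $w = \bigl(A_1\,\sigma(\widetilde{A_1})\bigr) \triangleleft v$, so $\ct(w) = \ct\bigl(A_1\,\sigma(\widetilde{A_1})\bigr) + \ct(v)$; the first summand is $0$ because the midpoint of $A_1\,\sigma(\widetilde{A_1})$ is the endpoint of $A_1$ and hence sits at height $0$, while $\ct(v) > 0$ by the induction hypothesis. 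This completes the induction and the proof.

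The step I expect to require the most care is the second of the two observations above, namely making precise that an occurrence of the generator $ab$ in the $\triangleleft$-factorization of a word corresponds to a matched pair of positions symmetric about the center of that word — equivalently, to one of Elizalde's centered tunnels. This is the place where the algebraic definition of $\ct$ through the central concatenation must be reconciled with its combinatorial meaning, and it amounts to unwinding the nested insertions $g_1 \triangleleft g_2 \triangleleft \cdots$ while keeping track of the lengths of the half-words involved. The remaining ingredients — the height characterization of $\Omega$, the palindromic reformulation of $\mathcal{D}^{\textrm{sym}}$, uniqueness of the factorization into irreducible Dyck words, and the stability of irreducibility under $w \mapsto \sigma(\widetilde w)$ — are routine or recorded in the preliminaries.
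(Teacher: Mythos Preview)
Your argument is correct. The core idea coincides with the paper's: a symmetric Dyck word with odd $\Omega$ has a self-paired central irreducible factor, and that factor supplies a centered tunnel; conversely, a centered tunnel forces the central factor to be irreducible and hence $\Omega$ to be odd. The paper packages both directions in one shot via the decomposition $w = u\,v\,\sigma(\widetilde{u})$ with $v$ irreducible (respectively $v = a v' b$), reading off $\Omega(w) = 1 + 2\,\Omega(u)$ and $\ct(w) > 0$ directly from that shape. You instead pass through the height profile: you use the palindromic identity $h_k = h_{2m-k}$ to tie the parity of $\Omega(w)$ to whether $h_m = 0$, dispatch the direction ``$\Omega$ even $\Rightarrow \ct = 0$'' by the no-straddling observation, and run an induction (peeling off $A_1$ and $A_r = \sigma(\widetilde{A_1})$) for the other direction. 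This is the same decomposition iterated rather than stated once, together with an explicit combinatorial reading of $\ct$; what you gain is a self-contained justification of the step the paper leaves implicit, namely why $\ct(w) > 0$ yields a factorization $w = u\,a\,v'\,b\,\sigma(\widetilde{u})$ with $u, v' \in \mathcal{D}$. The step you flag as delicate---identifying the algebraic $\ct$ with Elizalde's centered tunnels---is indeed the only nontrivial point, and your unwinding of $\triangleleft$ as ``insert the second argument at the midpoint of the first'' is exactly the right way to see it.
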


\begin{proof}
Take $w \in L_S$. By definition of $L_S$, we have that $\Omega(w)$ is odd. By Lemma \ref{lempkjsljdf90sf90s8}, there are $u, v \in \mathcal{D}$ such that $w = u \, v \, \sigma\left( \widetilde{u}\right)$ and $v$ is irreducible. By definition of $\mathcal{D}^{\textrm{irr}}$, there is  $v^{\prime} \in \mathcal{D}$ satisfying $v = a v^{\prime} b$. So, $w = u \, a \, v^{\prime} \, b \, \sigma\left( \widetilde{u}\right)$. It follows that $\ct(w) > 0$. Hence $w \in L_R$.

Now, take $w \in L_R$. By definition of $L_R$ we have that $\ct(w) > 0$. By Lemma \ref{lempkjsljdf90sf90s8}, there are $u, v^{\prime} \in \mathcal{D}$ such that $w = u \, a \, v^{\prime} \, b \, \sigma\left( \widetilde{u}\right)$. The Dyck word $v := a v^{\prime} b$ is irreducible and $w = u \, v \, \sigma\left( \widetilde{u}\right)$. It follows that $\Omega(w) = 1 + 2 \Omega(u)$. Hence, $w \in L_S$.

Therefore, $L_R = L_S$.
\end{proof}

\begin{proof}[Proof of Theorem \ref{teojrojiorjrwde}]
Consider a fixed real number $\lambda > 1$. Define the sets
\begin{eqnarray*}
R &:=& \left\{n \in \mathbb{Z}_{\geq 1}: \quad \textrm{middle}_{\lambda}(n) > 0 \right\}, \\
S &:=& \left\{n \in \mathbb{Z}_{\geq 1}: \quad \blocks_{\lambda}(n) \textrm{ odd} \right\}.
\end{eqnarray*}

Let $L_R$ and $L_S$ be the languages defined in Lemma \ref{lemjjfuishfishfishfishduh}. In virtue of Lemmas \ref{propjfsdfuisdhfiuhishifuhdhidshf} and \ref{propmmknnjknjknnbvvgcgcf},
$$
R = \bigcup_{w \in L_R} \gamma^{-1}(w)\in \mathfrak{U}_{\textbf{KR}_{\lambda}} \textrm{ and } S = \bigcup_{w \in L_S} \gamma^{-1}(w)\in \mathfrak{U}_{\textbf{KR}_{\lambda}},
$$
where $\gamma$ is from $\textbf{KR}_{\lambda} = \left(\mathcal{U}, \Sigma, \gamma \right)$. By definition of $\mathcal{L}_{\lambda}$, it follows that $\gamma\left( R\right) = L_R \cap \mathcal{L}_{\lambda}$ and $\gamma\left( S\right) = L_S \cap \mathcal{L}_{\lambda}$. In virtue of Lemma \ref{lemjjfuishfishfishfishduh}, $L_R = L_S$. So, $\gamma\left( R\right) = \gamma\left( S\right)$. By Definition \ref{Defundnudsufbsubfuywwe89}, ``$R = S$'' is provable in $\textbf{KR}_{\lambda}$. Using Lemma \ref{lemfundu3498ur9483ur983u9ru39}, we conclude that $R = S$.
\end{proof}

\section{Semi-perimeters of Pythagorean triangles}

\begin{definition}\label{defu48589355hjk5h34hk34}
Let $n \geq 1$ be an integer. We says that $n$ \emph{is the semi-perimeter of a Pythagorean triangle} if there are three integers $x, y, z \in \mathbb{Z}_{\geq 1}$ satisfying
$$
x^2 + y^2 = z^2 \textrm{ and } \frac{x+y+z}{2} = n. 
$$
\end{definition}

In order to work with semi-perimeters of Pythagorean triangles, we will need the following language-theoretical characterization.

\begin{lemma}\label{Lem89ru34899r834ur9}
An integer $n \geq 1$ is not the semi-perimeter of a Pythagorean triangle if and only if $\llangle n \rrangle_{2} \in \left( ab\right)^{\ast}$. 
\end{lemma}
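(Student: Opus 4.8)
The plan is to connect the combinatorial structure of $\llangle n \rrangle_2$ to the classical parametrization of Pythagorean triples. Recall that every Pythagorean triple (up to swapping the legs) is of the form $x = k(p^2-q^2)$, $y = 2kpq$, $z = k(p^2+q^2)$ with $p > q \geq 1$, $\gcd(p,q)=1$, $p \not\equiv q \pmod 2$; its semi-perimeter is $n = kp(p+q)$. So $n$ is the semi-perimeter of a Pythagorean triangle if and only if $n$ admits a factorization $n = m \cdot d$ where $d = p+q$ and $m$ is a multiple of $p$, with $q < p < d$ and the coprimality/parity conditions. Rather than wrestle directly with that number-theoretic bookkeeping, I would instead use a known description of the semi-perimeters set purely in terms of divisors: $n$ is a semi-perimeter of a Pythagorean triangle iff there is a divisor pair giving a factor of $n$ that is "balanced" around $\sqrt{n/2}$—i.e. iff $n$ has a divisor $d$ with $n/d < d < 2n/d$ for an appropriate reading, or more precisely iff the set of divisors of $n$ fails to interleave perfectly with $2D_n$. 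The cleanest route is to recognize that $\llangle n \rrangle_2 \in (ab)^\ast$ says exactly that the sorted merge of $D_n$ and $2D_n$ alternates $a,b,a,b,\dots$, which by the definition of $\llangle \cdot \rrangle_2$ means every element of $D_n \setminus 2D_n$ is immediately followed (in increasing order, within the symmetric difference) by an element of $2D_n \setminus D_n$, with no two consecutive $a$'s or two consecutive $b$'s.

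First I would translate "$\llangle n \rrangle_2 \in (ab)^\ast$" into a statement about divisors. Since $\llangle n \rrangle_2$ is a symmetric Dyck word (Lemma \ref{lempkjsljdf90sf90s8}), and $(ab)^\ast \cap \mathcal{D}^{\textrm{sym}}$ consists precisely of the words $(ab)^m$, the condition is that $\llangle n \rrangle_2$ has no factor $aa$ and no factor $bb$. A factor $aa$ would mean there are two consecutive divisors $d < d'$ of $n$ in the symmetric difference with $d, d' \in D_n \setminus 2D_n$ and nothing of $2D_n\setminus D_n$ strictly between them; translating, this is tied to the existence of a divisor $d$ of $n$ such that $d/2$ is also "nearly" a divisor in a controlled range—ultimately, to the failure of a certain interleaving. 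I expect the precise combinatorial reformulation to be: $\llangle n \rrangle_2 \notin (ab)^\ast$ iff there exists a divisor $e \mid n$ with $\sqrt{n/2} < e \leq \sqrt{2n}$ OR, more relevantly here, iff there is a pair of divisors $d_1 < d_2$ of $n$ with $d_1 < 2 d_1 < d_2$ not "separated" by the doubling—which corresponds to a nontrivial factorization $n = d \cdot m$ with $d < m < 2d$ in the right normalization. I would then match this factorization condition against the semi-perimeter parametrization $n = p(p+q)k$.

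The key steps, in order, would be: (1) characterize $(ab)^\ast \cap \mathcal{D}^{\textrm{sym}}$ and restate membership via forbidden factors $aa$, $bb$; (2) unwind the definition of $\llangle n \rrangle_2$ to show that a forbidden factor occurs iff the divisors of $n$, when merged with their doubles, have a "gap" that forces two $a$'s or two $b$'s in a row—equivalently, iff $n$ has divisors $d < d'$ with $d' \leq 2d$ and $d' \neq 2d$, i.e. iff the ratio between some pair of consecutive divisors of $n$ is strictly between $1$ and $2$; (3) show this "consecutive-divisor-ratio in $(1,2)$" condition is equivalent to $n$ being a semi-perimeter of a Pythagorean triangle, by exhibiting, from such a pair, integers $p > q$ with $p + q \mid n$, $p \mid n/(p+q)$ and the parity/coprimality constraints, and conversely deriving such a pair from a Pythagorean triangle of semi-perimeter $n$. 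I would lean on the classical fact (see, e.g., the literature on the sequence of semi-perimeters of Pythagorean triangles) that $n$ is such a semi-perimeter precisely when $n$ has two divisors $d_1 \mid d_2$ of opposite nature under the $D_n$-versus-$2D_n$ reading—essentially, when $\llangle n\rrangle_2$ is not perfectly alternating—so that step (3) becomes mostly a citation plus a short verification.

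The main obstacle is step (3): making the bridge between the purely combinatorial "divisors fail to interleave with their doubles" statement and the Diophantine "semi-perimeter of a Pythagorean triangle" statement fully rigorous, including the coprimality and opposite-parity conditions in the primitive parametrization. One has to be careful that a factorization $n = m \cdot d$ with $d < m < 2d$ can genuinely be realized by a (not necessarily primitive) Pythagorean triple — writing $d = p + q$ is not automatic unless one checks $p > q \geq 1$ and handles the $\gcd$ and parity by absorbing common factors into the scaling $k$. I would handle this by reducing to the primitive case and arguing that the extra conditions can always be met after dividing out the appropriate common factor, but this is where a genuine (if elementary) number-theoretic argument is unavoidable, and it is the part I would write out in full detail rather than cite.
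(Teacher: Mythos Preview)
Your outline converges on the right target --- reducing both sides to the condition ``there exist divisors $d_1 < d_2$ of $n$ with $d_2 < 2d_1$'' --- but you are doing more work than necessary at both ends, and the paper's route is cleaner. For your step (2), the paper simply invokes the height lemma (Lemma~\ref{propklfjdkls98798798fsdsf}): the height of the Dyck path $\llangle n \rrangle_2$ equals the largest $h$ for which $n$ admits divisors $d_1 < \cdots < d_h < 2d_1$, so $\llangle n \rrangle_2 \in (ab)^\ast$ (height $1$) is equivalent to the nonexistence of a pair $d_1 < d_2 < 2d_1$. Your plan to unwind the definition and argue via forbidden factors $aa$, $bb$ is essentially a reproof of the $h=2$ case of that lemma by hand; it can be made to work, but it is strictly more labor. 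Incidentally, the middle-divisor reformulation you float in passing is \emph{not} equivalent to $\llangle n\rrangle_2 \notin (ab)^\ast$: for $n=2$ one has the middle divisor $d=2$ yet $\llangle 2\rrangle_2 = ab$, so that branch should be discarded. Also note the slip ``$d_1 < 2d_1 < d_2$'' where you mean $d_1 < d_2 < 2d_1$.

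For your step (3), the worry about coprimality and parity is misplaced, and this is why the paper dismisses it as ``straightforward'' with a citation. Once non-primitive triples are allowed, the parametrization $n = kp(p+q)$ with $p > q \geq 1$ needs no side conditions: any such $k,p,q$ already gives the Pythagorean triple $(k(p^2-q^2),\,2kpq,\,k(p^2+q^2))$ with semi-perimeter $n$. Conversely, from divisors $d_1 < d_2 < 2d_1$ set $g=\gcd(d_1,d_2)$, $p=d_1/g$, $q=d_2/g - p$; then $\gcd(p,p+q)=1$ forces $gp(p+q)\mid n$, and $k = n/(p(p+q))$ works. So the bridge you flag as the ``main obstacle'' is a few lines, not a genuine difficulty.
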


We will use the following auxiliary result.
\begin{lemma}\label{propklfjdkls98798798fsdsf}
 For any integer $n \geq 1$ and any real number $\lambda > 1$, the height of the Dyck path $\llangle n \rrangle_{\lambda}$ is the largest value of $h$ such that we can find $h$ divisors of $n$, denoted $d_1, d_2, ..., d_h$, satisfying
$$
d_1 < d_2 < ... < d_h < \lambda d_1.
$$
\end{lemma}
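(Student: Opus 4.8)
The plan is to interpret the height of $\llangle n \rrangle_{\lambda}$ analytically as the maximum of a single counting function of a real threshold, and then to read off the divisor-chain characterization directly from that function. Write $\llangle n \rrangle_{\lambda} = w_1 w_2 \cdots w_k$ and let $u_1 < u_2 < \cdots < u_k$ be the elements of $D_n \triangle \lambda D_n$ as in the definition. Reading $a$ as an up-step and $b$ as a down-step, the height of the Dyck path is $\max_{0 \le j \le k} H(u_j)$, where $H(u_j) := \#\{i \le j : w_i = a\} - \#\{i \le j : w_i = b\}$ is the ordinate reached after $j$ steps (with $H(u_0) := 0$). By Lemma \ref{lempkjsljdf90sf90s8} the word is a genuine Dyck word, so this quantity is well defined and non-negative. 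First I would extend this to a step function $H(t)$ of a real variable $t > 0$ that records the signed contributions of all symmetric-difference elements that are $\le t$.

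The heart of the argument is a cancellation identity. Let $A(t) := |\{d : d \mid n,\ d \le t\}|$ be the number of divisors of $n$ not exceeding $t$. Since an $a$ is recorded exactly at the elements of $D_n \setminus \lambda D_n$ and a $b$ exactly at the elements of $\lambda D_n \setminus D_n$, I would split each count through the intersection $D_n \cap \lambda D_n$:
\[
H(t) = \bigl(A(t) - C(t)\bigr) - \bigl(A(t/\lambda) - C(t)\bigr) = A(t) - A(t/\lambda),
\]
where $C(t) := |\{u \le t : u \in D_n \cap \lambda D_n\}|$ and where I use $|\{u \le t : u \in \lambda D_n\}| = |\{d \mid n : d \le t/\lambda\}| = A(t/\lambda)$. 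The intersection terms cancel, leaving
\[
H(t) = A(t) - A(t/\lambda) = \bigl|\{d : d \mid n,\ t/\lambda < d \le t\}\bigr|,
\]
the number of divisors of $n$ lying in the half-open interval $(t/\lambda, t]$. This cancellation, which absorbs the two set-differences in the definition of $\llangle n \rrangle_{\lambda}$ into a single interval count, is the step I expect to be the main obstacle to state cleanly.

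It then remains to match $\max_t H(t)$ with the divisor-chain quantity. For one inequality, if $d_1 < d_2 < \cdots < d_h$ are divisors of $n$ with $d_h < \lambda d_1$, then taking $t = d_h$ gives $t/\lambda = d_h/\lambda < d_1$, so all $h$ divisors lie in $(t/\lambda, t]$ and $H(d_h) \ge h$. For the reverse, if $H(t)$ is maximal, the divisors $d_1 < \cdots < d_m$ counted in $(t/\lambda, t]$ satisfy $d_1 > t/\lambda$, whence $\lambda d_1 > t \ge d_m$ and so $d_m < \lambda d_1$; thus they form an admissible chain of length $m$. Finally I would note that $H$ is constant on each interval between consecutive symmetric-difference values and increases (by one) only at elements of $D_n \setminus \lambda D_n$, that is, at the $a$-steps; hence its supremum over real $t$ is attained at one of the word positions $u_j$, and $\max_t H(t) = \max_j H(u_j)$ equals the height of the Dyck path, completing the identification.
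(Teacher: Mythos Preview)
Your argument is correct: the cancellation identity $H(t)=A(t)-A(t/\lambda)=\#\{d\mid n:\ t/\lambda<d\le t\}$ is exactly the right reformulation, and your two inequalities cleanly identify $\max_t H(t)$ with the maximal length of a divisor chain $d_1<\cdots<d_h<\lambda d_1$; the final remark that the step function $H$ attains its supremum at an $a$-position closes the loop with the combinatorial height. The paper itself does not prove this lemma here at all---it simply cites Theorem~2(ii) of \cite{Caballero1}---so your proposal is a genuine, self-contained proof rather than a paraphrase. In that sense there is no approach to compare against in the present paper; what you have written could serve as the missing argument, and the analytic counting-function viewpoint you adopt (reducing the Dyck height to $\max_t\bigl(A(t)-A(t/\lambda)\bigr)$) is the standard and most transparent route to this statement.
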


\begin{proof}
See Theorem 2(ii) in \cite{Caballero1}.
\end{proof}

\begin{proof}[Proof of Lemma \ref{Lem89ru34899r834ur9}]
From the explicit formula for Pythagorean triples (see \cite{sierpinski2003pythagorean}), it follows in a straightforward way that an integer $n \geq 1$ is the semi-perimeter of a Pythagorean triangle if and only if there are two divisors of $n$, denoted $d_1$ and $d_2$, satisfying,
$$
d_1 < d_2 < 2d_1.
$$

By Lemma \ref{lempkjsljdf90sf90s8}, $\llangle n \rrangle_{2}$ is a Dyck word, so its height as Dyck path is well-defined. In virtue of Lemma \ref{propklfjdkls98798798fsdsf}, such divisors $d_1$ and $d_2$ do exist if and only if the height of $\llangle n \rrangle_{2}$ at least $2$. Therefore, $n$ is not the semi-perimeter of a Pythagorean triangle if and only if $\llangle n \rrangle_{2} \in \left( ab\right)^{\ast}$.
\end{proof}

\subsection{Even-trapezoidal numbers}
The number of partitions of a given integer $n \geq 1$ into an even number of consecutive parts was study in \cite{hirschhorn2009partitions}.

\begin{definition}\label{defnur3huihuihrwiuh98u89}
Let $n \geq 1$ be an integer. We says that $n$ \emph{even-trapezoidal} if there is at least a partition of $n$ into an even number of consecutive parts, i.e.
$$
n = \sum_{k=0}^{2m-1} \left(a + k\right)
$$
for two integers $a \geq 1$ and $m \geq 1$.
\end{definition}

It is rather easy to check that a power of $2$ is neither even-trapezoidal nor the semi-perimeter of a Pythagorean triangle. Nevertheless, the converse statement is non-trivial.

\begin{theorem}\label{teojosjdiojiosdfjs}
Let $n \geq 1$ be an integer. We have that $n$ is a power of $2$ (including $n = 1$) if and only if $n$ is neither even-trapezoidal nor the semi-perimeter of a Pythagorean triangle. Furthermore, this theorem is provable in $\textbf{KR}_{2}$.
\end{theorem}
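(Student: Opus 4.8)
The plan is to reduce the theorem, via the language-theoretic machinery, to a purely combinatorial statement about the words $\llangle n \rrangle_2$, and then to characterize those words. First I would observe that the three classes of integers in play all have clean descriptions in terms of the divisor structure that $\llangle n \rrangle_2$ encodes. By Lemma~\ref{Lem89ru34899r834ur9}, $n$ fails to be the semi-perimeter of a Pythagorean triangle exactly when $\llangle n \rrangle_2 \in (ab)^{\ast}$. For the even-trapezoidal condition I would recall the standard bijection between partitions of $n$ into consecutive parts and odd divisors of $n$ (equivalently, factorizations $n = \ell \cdot m$ with appropriate parity of $\ell$); a partition into an \emph{even} number $2m$ of consecutive parts starting at $a$ gives $n = m(2a + 2m - 1)$, so $n$ is even-trapezoidal iff $n$ has a divisor $d$ with $d$ and $n/d$ of opposite nature in the sense that $n/(2d) - 1/2$... — more cleanly, iff $n$ has an odd divisor $e > 1$ with $e < $ something, i.e. iff there are two divisors $d_1 < d_2$ of $n$ with $d_2 < 2 d_1$ that are \emph{consecutive in the divisor lattice in a suitable sense}. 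I expect the right statement to be that $n$ is even-trapezoidal iff $\llangle n \rrangle_2 \notin (ab)^{\ast}$ as well, or possibly a near-miss that combines with the Pythagorean condition; pinning down this translation is where I would be most careful.

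Granting those translations, the heart of the proof becomes: $\llangle n \rrangle_2 \in (ab)^{\ast}$ if and only if $n$ is a power of $2$. One direction is the ``easy'' remark already noted in the text: if $n = 2^k$ then $D_n \triangle 2D_n$ is $\{1, 2, 4, \dots, 2^{k+1}\} \setminus \{2, 4, \dots, 2^k\}$ listed appropriately, and a direct computation shows $\llangle n \rrangle_2 = (ab)^{k+1}$ (each divisor $2^j$ for $0 \le j \le k$ is in $D_n \setminus 2D_n$ contributing $a$, each $2^j$ for $1 \le j \le k+1$ is in $2D_n \setminus D_n$ contributing $b$, and interleaving by size gives $ababab\cdots$). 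For the converse I would argue contrapositively: if $n$ is not a power of $2$, it has an odd prime factor, hence two divisors $d_1 < d_2$ with $d_2 < 2 d_1$ (e.g. consecutive divisors of $n$ straddling $\sqrt{\cdot}$, or more simply: if $p$ is the smallest odd prime dividing $n$ and $2^k \| n$, then among $1, 2, \dots, 2^k, p$ some consecutive pair is within a factor of $2$ — careful here, one needs the ratio bound $p < 2^{k+1}$, which fails for large $p$; instead use that $n/p$ and... ). The robust route is Lemma~\ref{propklfjdkls98798798fsdsf}: $\llangle n \rrangle_2 \in (ab)^{\ast}$ iff the height of the Dyck path is at most $1$ iff one cannot find $d_1 < d_2$ divisors of $n$ with $d_2 < 2d_1$; and if $n$ is not a power of $2$ one exhibits such a pair by taking $d_1$ to be the largest divisor of $n$ that is at most $\sqrt{n}$ and $d_2$ the smallest that is $> \sqrt n$ — wait, that only gives $d_1 d_2 \mid $ stuff, not the ratio bound. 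Cleanly: let $2^k$ be the $2$-part of $n$ and let $q$ be the least prime factor of $n/2^k$; then I claim $2^k$ and $\min(2^{k+1}, q \cdot 2^{?})$... I would simply take the sorted divisor list and use that consecutive divisors of a non-prime-power $n$ eventually have ratio $< 2$ because the list has more than $O(\log n)$ elements while the largest divisor is $n$ — this is the step I'd expect to need the most care to state correctly, perhaps invoking that $n$ has a divisor in $(\sqrt{n/2}, \sqrt{2n}]$ precisely via a counting/pigeonhole argument on $D_n$.

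Once the combinatorial equivalence ``$\llangle n \rrangle_2 \in (ab)^{\ast}$ iff $n = 2^k$'' is established together with the translation of ``even-trapezoidal or Pythagorean semi-perimeter'' into ``$\llangle n \rrangle_2 \notin (ab)^{\ast}$'', the formal language-theoretic conclusion follows the exact template of the proof of Theorem~\ref{teojrojiorjrwde}. I would set
\begin{eqnarray*}
R &:=& \{ n \in \mathbb{Z}_{\geq 1} : n \text{ is a power of } 2 \}, \\
S &:=& \{ n \in \mathbb{Z}_{\geq 1} : n \text{ is neither even-trapezoidal nor a Pythagorean semi-perimeter} \},
\end{eqnarray*}
define the language $L := (ab)^{\ast} \subseteq \{a,b\}^{\ast}$ (a regular, hence perfectly legitimate, language), and observe that both $R$ and $S$ equal $\bigcup_{w \in L} \gamma^{-1}(w)$ where $\gamma(n) = \llangle n \rrangle_2$ is the map of $\textbf{KR}_2$ — so $R, S \in \mathfrak{U}_{\textbf{KR}_2}$ and $\gamma(R) = L \cap \mathcal{L}_2 = \gamma(S)$. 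By Definition~\ref{Defundnudsufbsubfuywwe89}, ``$R = S$'' is provable in $\textbf{KR}_2$, and by Lemma~\ref{lemfundu3498ur9483ur983u9ru39} it holds in \textbf{ZFC}. The main obstacle, to reiterate, is the elementary-number-theory lemma identifying even-trapezoidal numbers with words outside $(ab)^{\ast}$: I would prove it by the consecutive-parts/odd-divisor correspondence and check that having a nontrivial odd divisor is equivalent to having two divisors within a factor of $2$, which is exactly the height-$\ge 2$ condition of Lemma~\ref{propklfjdkls98798798fsdsf}.
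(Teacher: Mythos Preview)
Your proposal has a genuine gap: the three conditions in play translate to three \emph{different} languages, not to the single language $(ab)^{\ast}$, and two of your key claims are false as stated.

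First, your computation of $\llangle 2^k \rrangle_2$ is incorrect. For $1 \le j \le k$ the element $2^j$ lies in \emph{both} $D_n$ and $2D_n$, hence not in the symmetric difference; one gets $D_n \triangle 2D_n = \{1, 2^{k+1}\}$ and therefore $\llangle 2^k \rrangle_2 = ab$, not $(ab)^{k+1}$. This immediately kills the claim ``$\llangle n \rrangle_2 \in (ab)^{\ast}$ iff $n$ is a power of $2$'': take $n = 3$, whose divisors give $D_n \triangle 2D_n = \{1,2,3,6\}$ and $\llangle 3 \rrangle_2 = abab \in (ab)^{\ast}$. The same example refutes the hoped-for identification of even-trapezoidality with $\llangle n \rrangle_2 \notin (ab)^{\ast}$: we have $3 = 1+2$, so $3$ is even-trapezoidal, yet $\llangle 3 \rrangle_2 \in (ab)^{\ast}$. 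Your final sentence (``having a nontrivial odd divisor is equivalent to having two divisors within a factor of $2$'') fails for the same $n=3$.

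What the paper does instead is keep the three languages separate. It shows (Lemma~\ref{Lemj89jw9efw98j98je9}) that $n$ is a power of $2$ iff $\llangle n \rrangle_2 = ab$; it keeps Lemma~\ref{Lem89ru34899r834ur9} for the Pythagorean condition giving $(ab)^{\ast}$; and, crucially, it proves (Lemma~\ref{Lemksjdlkjfkljsfl}, via the Hirschhorn--Hirschhorn count of partitions into an even number of consecutive parts and Lemma~\ref{lemjsiojdoifjosdjfofjs79} on $\ell_{ab}$) that $n$ is \emph{not} even-trapezoidal iff $\llangle n \rrangle_2 \in \{a^k b^k : k \ge 1\}$. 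The language-theoretic heart of the theorem is then the trivial identity
\[
\{ab\} \;=\; \{a^k b^k : k \ge 1\} \cap (ab)^{\ast},
\]
after which the $\textbf{KR}_2$ template you describe applies verbatim with $L_R = \{ab\}$ and $L_S = \{a^k b^k\} \cap (ab)^{\ast}$. So your overall architecture (translate to languages, intersect, invoke Definition~\ref{Defundnudsufbsubfuywwe89} and Lemma~\ref{lemfundu3498ur9483ur983u9ru39}) is exactly right; the missing ingredient is the correct language $\{a^k b^k\}$ for the even-trapezoidal side, which is genuinely distinct from $(ab)^{\ast}$.
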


We will use the following auxiliary results.

\begin{lemma}\label{Lemj89jw9efw98j98je9}
For all integers $n \geq 1$, we have that $n$ is a power of $2$ (including $n = 1$) if and only if $\llangle n \rrangle_2 = ab$.
\end{lemma}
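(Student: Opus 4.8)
The plan is to prove Lemma \ref{Lemj89jw9efw98j98je9} by unwinding the definition of $\llangle n \rrangle_2$ and reducing everything to elementary statements about the divisor set $D_n$. First I would recall that, by Lemma \ref{lempkjsljdf90sf90s8}, $\llangle n \rrangle_2$ is always a symmetric Dyck word, so it has even length $2\ell$ where $\ell = |D_n \setminus 2D_n|$ (equivalently $\ell$ is the number of odd divisors of $n$, since $d \in D_n \setminus 2D_n$ iff $d \mid n$ and $d$ is not twice a divisor of $n$; in fact one checks $|D_n \triangle 2D_n| = 2\,\#\{d \mid n : d \text{ odd}\}$). Hence $\llangle n \rrangle_2 = ab$ if and only if the word has length exactly $2$, i.e. if and only if $n$ has exactly one odd divisor, which happens precisely when $n$ is a power of $2$ (including $n=1$, whose only divisor is the odd number $1$).

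Concretely, I would argue in two directions. For the forward direction, if $n = 2^k$ then $D_n = \{1, 2, 4, \dots, 2^k\}$ and $2D_n = \{2, 4, \dots, 2^{k+1}\}$, so $D_n \triangle 2D_n = \{1, 2^{k+1}\}$; the two elements in increasing order are $u_1 = 1 \in D_n \setminus 2D_n$ and $u_2 = 2^{k+1} \in 2D_n \setminus D_n$, giving $w_1 = a$, $w_2 = b$, hence $\llangle n \rrangle_2 = ab$. For the converse, suppose $n$ is not a power of $2$, so $n$ has an odd divisor $m > 1$. Then $\{1, m\} \subseteq D_n$ are two divisors with $1 < m$, and more importantly I would exhibit enough elements of $D_n \triangle 2D_n$ to force length $> 2$: the smallest element of $D_n \triangle 2D_n$ is always $1$ (it lies in $D_n \setminus 2D_n$), and since $n$ has at least two odd divisors the symmetric difference has at least $4$ elements, so $|\llangle n \rrangle_2| \geq 4$ and in particular $\llangle n \rrangle_2 \neq ab$.

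Alternatively — and perhaps more cleanly — I would instead invoke Lemma \ref{propklfjdkls98798798fsdsf} together with Lemma \ref{Lem89ru34899r834ur9}: the word $\llangle n \rrangle_2$ equals $ab$ iff it lies in $(ab)^*$ and has length $2$, i.e. iff $n$ is not the semi-perimeter of a Pythagorean triangle (by Lemma \ref{Lem89ru34899r834ur9}) and additionally $\llangle n \rrangle_2$ has length $2$. Length $2$ means $n$ has exactly one odd divisor, i.e. $n = 2^k$, and one checks directly that every power of $2$ fails the Pythagorean semi-perimeter condition (there are no two divisors $d_1 < d_2 < 2d_1$ among powers of $2$). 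I would present whichever of these two routes is shortest given what has already been set up; the symmetric-difference computation is completely self-contained, so I lean toward the first.

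The main obstacle — really the only non-formal point — is the combinatorial identity that $D_n \triangle 2D_n$ has exactly two elements if and only if $n$ is a power of $2$. This is elementary: writing $n = 2^k m$ with $m$ odd, the odd divisors of $n$ are exactly the divisors of $m$, and a short argument shows $|D_n \triangle 2D_n| = 2\,d(m)$ where $d(\cdot)$ is the number-of-divisors function, so the symmetric difference is a doubleton iff $d(m) = 1$ iff $m = 1$. No serious difficulty is expected here; the rest is bookkeeping about which of the two elements gets label $a$ and which gets label $b$, which is immediate since the smallest element $1$ is never in $2D_n$ and the largest element $2n \notin D_n$ (or more simply, symmetry of the Dyck word forces the pattern $ab$ on a length-$2$ word).
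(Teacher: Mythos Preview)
Your proposal is correct and follows essentially the same approach as the paper: both directions rely on the fact that $|\llangle n \rrangle_2| = |D_n \triangle 2D_n|$ equals twice the number of odd divisors of $n$, together with the direct computation $D_n \triangle 2D_n = \{1, 2n\}$ when $n$ is a power of $2$. Your write-up is a bit more explicit about why the length equals twice the number of odd divisors, but the argument is the same; the alternative route through Lemma~\ref{Lem89ru34899r834ur9} is unnecessary and the paper does not use it.
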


\begin{proof}
Take $n \in \mathbb{Z}_{\geq 1}$. 

Suppose that $\llangle n \rrangle_2 = ab$. By definition of $\llangle n \rrangle_2$, the length of $\llangle n \rrangle_2$ is two times the number of odd divisors of $n$. So, $n$ has exactly $1$ odd divisors. It follows that $n$ is a power of $2$ (including $n = 1$).

Suppose that $n$ is a power of $2$ (including $n = 1$). It follows that 
$$
D_n \triangle 2D_n = \left\{ 1 < 2n\right\},
$$
with $1 \in D_n \backslash \left( 2D_n \right)$ and $2n \in \left( 2D_n \right) \backslash D_n$. By definition of $\llangle n \rrangle_2$, we conclude that $\llangle n \rrangle_2 = ab$.
\end{proof}

\begin{lemma}\label{lemjsiojdoifjosdjfofjs79}
 For any integer $n \geq 1$ and any real number $\lambda > 1$, we have
$$
\ell_{ab}\left(\llangle n \rrangle_{\lambda} \right) = \# \left\{ d | n: \quad d \not\in \lambda D_n \textrm{ and } d < \sqrt{\lambda n}\right\},
$$
where $D_n$ is the set of divisors of $n$.
\end{lemma}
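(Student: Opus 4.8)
The goal is to prove Lemma \ref{lemjsiojdoifjosdjfofjs79}: for any integer $n \geq 1$ and real $\lambda > 1$,
$$
\ell_{ab}\left(\llangle n \rrangle_{\lambda} \right) = \# \left\{ d \mid n: \quad d \not\in \lambda D_n \textrm{ and } d < \sqrt{\lambda n}\right\}.
$$
My plan is to unwind the definition of the central-concatenation letter-counting morphism $\ell_{ab}$ directly in terms of the letters of $\llangle n \rrangle_{\lambda}$, and then translate the resulting condition back into a statement about divisors of $n$ via the original definition of $\llangle n \rrangle_{\lambda}$ and the symmetry supplied by Lemma \ref{lempkjsljdf90sf90s8}.

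**Key steps, in order.** First I would recall that, by the definition of $\varphi$, a word $w = x_1 x_2 \cdots x_{2\ell} \in \{a,b\}^{\ast}$ of even length decomposes under $\varphi^{-1}$ into the sequence of pairs $(x_i x_{2\ell+1-i})$ for $i = 1, \dots, \ell$, read from the outside in; equivalently $\ell_{ab}(w)$ counts the indices $i \le \ell$ for which $x_i = a$ and $x_{2\ell+1-i} = b$. By Lemma \ref{lempkjsljdf90sf90s8}, $w := \llangle n \rrangle_{\lambda}$ lies in $\mathcal{D}^{\textrm{sym}}$, so $x_{2\ell+1-i} = \sigma(x_i)$ for every $i$; hence the condition ``$x_i = a$ and $x_{2\ell+1-i} = b$'' is simply ``$x_i = a$'', i.e. $\ell_{ab}(w)$ equals the number of $a$'s occurring in the first half $x_1 \cdots x_\ell$ of $w$. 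Second, I would invoke the explicit definition of $\llangle n \rrangle_{\lambda}$: if $u_1 < u_2 < \cdots < u_{2\ell}$ are the elements of $D_n \triangle \lambda D_n$, then $x_i = a$ exactly when $u_i \in D_n \setminus (\lambda D_n)$. The ``first half'' consists of the indices $i$ with $u_i$ among the $\ell$ smallest elements of the symmetric difference; by the symmetry of $D_n \triangle \lambda D_n$ about $\sqrt{\lambda n}$ (which is encoded by $w$ being symmetric — the involution $d \mapsto \lambda n/d$ swaps $D_n$ and $\lambda D_n$ and reverses order), $u_i$ lies in the first half if and only if $u_i < \sqrt{\lambda n}$. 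Therefore $\ell_{ab}(w)$ counts exactly the elements $d$ of $D_n \setminus (\lambda D_n)$ with $d < \sqrt{\lambda n}$, which is precisely $\#\{d \mid n : d \notin \lambda D_n \textrm{ and } d < \sqrt{\lambda n}\}$, as claimed. (One should note $\sqrt{\lambda n}$ itself never lies in $D_n \triangle \lambda D_n$: if it were a divisor it would also be in $\lambda D_n$, and conversely, so the strict inequality is harmless.)

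**Main obstacle.** The only real subtlety is justifying the identification ``among the $\ell$ smallest elements of $D_n \triangle \lambda D_n$'' $\iff$ ``less than $\sqrt{\lambda n}$.'' This is exactly the content of the symmetry of the Dyck word established in \cite{Caballero1} and quoted as Lemma \ref{lempkjsljdf90sf90s8}: the map $u \mapsto \lambda n / u$ is an order-reversing bijection of $D_n \triangle \lambda D_n$ onto itself that interchanges $D_n \setminus \lambda D_n$ with $\lambda D_n \setminus D_n$, so it pairs $u_i$ with $u_{2\ell+1-i}$ and its fixed point would be $\sqrt{\lambda n}$. Hence exactly $\ell$ elements of the symmetric difference lie strictly below $\sqrt{\lambda n}$ and they are $u_1, \dots, u_\ell$. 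Once this correspondence is spelled out, the rest is bookkeeping. I would therefore devote the bulk of the written proof to this symmetry argument and treat the $\varphi^{-1}$-pairing computation as routine.
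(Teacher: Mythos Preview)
Your argument is correct and complete: the reduction of $\ell_{ab}$ to counting indices $i\le \ell$ with $x_i=a$ and $x_{2\ell+1-i}=b$ follows directly from the recursion defining $\varphi$, the symmetry $x_{2\ell+1-i}=\sigma(x_i)$ from Lemma~\ref{lempkjsljdf90sf90s8} collapses this to counting $a$'s in the first half, and the involution $u\mapsto \lambda n/u$ on $D_n\triangle\lambda D_n$ (together with your observation that $\sqrt{\lambda n}$ is fixed by it and hence not in the symmetric difference) identifies the first half with $\{u_i<\sqrt{\lambda n}\}$.

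As for comparison: the paper does not give its own proof of this lemma at all --- it simply cites Lemma~3.4 of \cite{Caballero3}. So your self-contained argument is strictly more than what the paper provides here, and there is nothing to compare against within the paper itself.
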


\begin{proof}
See Lemma 3.4. in \cite{Caballero3}.
\end{proof}

\begin{lemma}\label{Lemksjdlkjfkljsfl}
For all $n \geq 1$, we have that $n$ is not even-trapezoidal if and only if $\llangle n \rrangle_2 \in \left\{ a^k \, b^k: \quad k \in \mathbb{Z}_{\geq 1} \right\}$.
\end{lemma}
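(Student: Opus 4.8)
The plan is to reduce the statement to a property of symmetric Dyck words through two reformulations. First I would establish the arithmetic fact that $n$ is even-trapezoidal if and only if $n$ has an odd divisor $o$ with $o>\sqrt{2n}$. Summing a partition $n=\sum_{k=0}^{2m-1}(a+k)$ gives $n=m\,(2a+2m-1)$, a factorization of $n$ whose second factor is odd and strictly larger than $2m$; conversely, given an odd divisor $o\mid n$ with $o>2(n/o)$, one sets $m:=n/o\ge 1$ and $a:=\tfrac12(o-2m+1)$, which is a positive integer because $o$ is odd and $o>2m$. Since $o$ odd makes $o^2=2n$ impossible, the inequality $o>2(n/o)$ is the same as $o>\sqrt{2n}$. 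Hence $n$ is \emph{not} even-trapezoidal precisely when every odd divisor of $n$ is $<\sqrt{2n}$, equivalently when $\#\{d\mid n:\ d\text{ odd},\ d<\sqrt{2n}\}$ equals the total number of odd divisors of $n$.

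Next I would translate this into a statement about $\llangle n\rrangle_{2}$. If $d\mid n$ is even then $d/2\mid n$, so $d\in 2D_n$; therefore $D_n\setminus 2D_n$ is exactly the set of odd divisors of $n$. By Lemma~\ref{lempkjsljdf90sf90s8}, $\llangle n\rrangle_{2}$ is a symmetric Dyck word, so it has as many letters $a$ as letters $b$; since the letters $a$ correspond to the elements of $D_n\setminus 2D_n$, the word $\llangle n\rrangle_{2}$ has length $2k$, where $k$ is the number of odd divisors of $n$. On the other hand, Lemma~\ref{lemjsiojdoifjosdjfofjs79} with $\lambda=2$ gives $\ell_{ab}(\llangle n\rrangle_{2})=\#\{d\mid n:\ d\notin 2D_n,\ d<\sqrt{2n}\}=\#\{d\mid n:\ d\text{ odd},\ d<\sqrt{2n}\}$. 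Combining with the previous paragraph, $n$ is not even-trapezoidal if and only if $\ell_{ab}(\llangle n\rrangle_{2})=k$, i.e.\ $\ell_{ab}(\llangle n\rrangle_{2})$ equals half the length of $\llangle n\rrangle_{2}$.

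It then remains to prove a combinatorial lemma: a symmetric Dyck word $w=w_1\cdots w_{2k}$ with $k\ge 1$ satisfies $w=a^k b^k$ if and only if $\ell_{ab}(w)=k$. An induction on $k$ from the recursion $\varphi(x\,u\,y)=(xy)\,\varphi(u)$ shows $\varphi(w)=(w_1 w_{2k})(w_2 w_{2k-1})\cdots(w_k w_{k+1})$; since $\varphi$ carries $(\mathcal{S}^{\ast},\triangleleft)$ isomorphically onto $(\mathcal{S}^{\ast},\cdot)$ while fixing $\mathcal{S}$ pointwise, and $\ell_{ab}$ is a $\triangleleft$-morphism, $\ell_{ab}(w)$ counts the indices $i\le k$ with $w_i w_{2k+1-i}=ab$. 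As $w$ is a symmetric Dyck word, $w_{2k+1-i}=\sigma(w_i)$, so $w_i=a$ already forces $w_{2k+1-i}=b$; hence $\ell_{ab}(w)=\#\{i\le k:\ w_i=a\}\le k$, with equality exactly when $w_1=\cdots=w_k=a$, in which case symmetry gives $w_{k+1}=\cdots=w_{2k}=b$, i.e.\ $w=a^k b^k$. Applying this to $w=\llangle n\rrangle_{2}$ (a symmetric Dyck word of half-length $k\ge 1$, since $1\mid n$) and chaining the equivalences above yields the claim.

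The step I expect to require the most care is the arithmetic characterization in the first paragraph: one must keep the inequalities strict, check that $m$ and $a$ are genuine positive integers, and note that $o^2=2n$ cannot occur so that the bound $<\sqrt{2n}$ and the bound $\le\sqrt{2n}$ coincide. The divisor bookkeeping in the middle step is immediate, and the combinatorial lemma is essentially a direct unwinding of the definition of $\varphi$ together with the symmetry of $\llangle n\rrangle_{2}$.
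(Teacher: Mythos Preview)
Your proof is correct and follows the same three-step structure as the paper's: characterize even-trapezoidal numbers via odd divisors exceeding $\sqrt{2n}$, translate via Lemma~\ref{lemjsiojdoifjosdjfofjs79} into the condition $\ell_{ab}(\llangle n\rrangle_2)=\tfrac12\,|\llangle n\rrangle_2|$, and then identify this with $\llangle n\rrangle_2=a^kb^k$. The only differences are cosmetic: you supply the arithmetic characterization directly (the paper cites Hirschhorn--Hirschhorn for it), and you justify the final combinatorial step in detail via the symmetry of $\llangle n\rrangle_2$, whereas the paper simply asserts it from the Dyck property---indeed, for any balanced word $w$ of length $2k$, $\ell_{ab}(w)=k$ forces $w_i=a$ and $w_{2k+1-i}=b$ for all $i\le k$, so symmetry is not actually needed there.
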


\begin{proof}
It was proved in \cite{hirschhorn2009partitions} that the number of partitions of $n$ into an even number of consecutive parts is precisely the cardinality of the set
$$
\left\{ d | n: \quad d \not\in 2 D_n \textrm{ and } d > \sqrt{2 n}\right\}.
$$

Notice that if  $d = \sqrt{2 n}$ is a divisor of $n$, then $d = 2\frac{n}{d}$ is even. So, an integer $n \geq 1$ is not even-trapezoidal if and only if 
$$
\# \left\{ d | n: \quad d \not\in 2 D_n \textrm{ and } d < \sqrt{2 n}\right\} = \frac{1}{2}\,\left|\llangle n \rrangle_2 \right|.
$$

By Lemma \ref{lempkjsljdf90sf90s8}, $\llangle n \rrangle_{2}$ is a Dyck word, so $\ell_{ab}\left(\llangle n \rrangle_{2} \right)$ is well-defined. In virtue of Lemma \ref{lemjsiojdoifjosdjfofjs79}, an integer $n \geq 1$ is not even-trapezoidal if and only if 
$$
\ell_{ab}\left(\llangle n \rrangle_{2} \right) = \frac{1}{2}\,\left|\llangle n \rrangle_2 \right|.
$$

This last condition holds if and only if there is $k \in \mathbb{Z}_{\geq 1}$ such that $\llangle n \rrangle_{2} = a^k \, b^k$, because $\llangle n \rrangle_{2}$ is a Dyck word. Therefore, $n$ is not even-trapezoidal if and only if $\llangle n \rrangle_2 \in \left\{ a^k \, b^k: \quad k \in \mathbb{Z}_{\geq 1} \right\}$.
\end{proof}

\begin{proof}[Proof of Theorem \ref{teojosjdiojiosdfjs}]
Define the sets
\begin{eqnarray*}
R &:=& \left\{2^m: \quad m \in \mathbb{Z}_{\geq 0} \right\}, \\
S &:=& \left\{n \in \mathbb{Z}_{\geq 1}: \quad \begin{array}{c}
\neg\left(n \textrm{ even-trapezoidal}\right) \textrm{ and} \\
\neg\left(n \textrm{ semi-perimeter of a Pythagorean triangle}\right)
\end{array} \right\}.
\end{eqnarray*}

Consider the languages
\begin{eqnarray*}
L_R &=& \left\{ ab \right\}, \\
L_S &=& \left\{ a^k \, b^k: \quad k \in \mathbb{Z}_{\geq 1} \right\} \cap \left(ab \right)^{\ast}.
\end{eqnarray*}

 In virtue of Lemmas \ref{Lemj89jw9efw98j98je9}, \ref{Lem89ru34899r834ur9} and \ref{Lemksjdlkjfkljsfl},
$$
R = \bigcup_{w \in L_R} \gamma^{-1}(w)\in \mathfrak{U}_{\textbf{KR}_{2}} \textrm{ and } S = \bigcup_{w \in L_S} \gamma^{-1}(w)\in \mathfrak{U}_{\textbf{KR}_{2}},
$$
where $\gamma$ is from $\textbf{KR}_{\lambda} = \left(\mathcal{U}, \Sigma, \gamma \right)$. Furthermore, $\gamma\left( R\right) = L_R \cap \mathcal{L}_{2}$ and $\gamma\left( S\right) = L_S \cap \mathcal{L}_{2}$.

It easily follows that $L_R = L_S$. So, $\gamma\left( R\right) = \gamma\left( S\right)$. By Definition \ref{Defundnudsufbsubfuywwe89}, ``$R = S$'' is provable in $\textbf{KR}_{2}$. Using Lemma \ref{lemfundu3498ur9483ur983u9ru39}, we conclude that $R = S$.
\end{proof}

\subsection{Densely divisible numbers}

The so-called $\lambda$-densely divisible numbers were introduced in \cite{polymath8} by the project \emph{polymath8}, led by Terence Tao.

\begin{definition}\label{defju87fd97s7f7ds987f98sd7}
Consider a real number $\lambda > 1$. Let $n \geq 1$ be an integer. We say that $n$ is \emph{$\lambda$-densely divisible} if $\blocks_{\lambda}(n) = 1$.
\end{definition}

Again, it can be proved in a straightforward way that powers of $2$ are $2$-densely divisible number. But it is more complicated to prove that, for a given positive integer, to be a $2$-densely divisible number,
without being the semi-perimeter of a Pythagorean triangle, it is enough to be a power of $2$.

\begin{theorem}\label{teo8u8439u98ur39ur93}
Let $n \geq 1$ be an integer. We have that $n$ is a power of $2$ (including $n = 1$) if and only if both $n$ is $2$-densely divisible and it is not the semi-perimeter of a Pythagorean triangle. Furthermore, this theorem is provable in $\textbf{KR}_{2}$.
\end{theorem}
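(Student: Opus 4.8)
The plan is to mimic exactly the structure of the proof of Theorem~\ref{teojosjdiojiosdfjs}, replacing the ``even-trapezoidal'' condition by ``$2$-densely divisible''. First I would set
\begin{eqnarray*}
R &:=& \left\{2^m: \quad m \in \mathbb{Z}_{\geq 0} \right\}, \\
S &:=& \left\{n \in \mathbb{Z}_{\geq 1}: \quad \begin{array}{c}
\left(n \textrm{ is } 2\textrm{-densely divisible}\right) \textrm{ and} \\
\neg\left(n \textrm{ semi-perimeter of a Pythagorean triangle}\right)
\end{array} \right\},
\end{eqnarray*}
and observe that, by Definition~\ref{defju87fd97s7f7ds987f98sd7} together with Lemma~\ref{propmmknnjknjknnbvvgcgcf}, the condition ``$n$ is $2$-densely divisible'' translates into the language condition ``$\Omega\left(\llangle n \rrangle_2\right) = 1$'', i.e. $\llangle n \rrangle_2$ is an irreducible Dyck word; and by Lemma~\ref{Lem89ru34899r834ur9}, ``$n$ is not the semi-perimeter of a Pythagorean triangle'' translates into $\llangle n \rrangle_2 \in (ab)^{\ast}$. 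Hence I would take
\begin{eqnarray*}
L_R &=& \left\{ ab \right\}, \\
L_S &=& \left\{ w \in \mathcal{D}: \quad \Omega(w) = 1 \right\} \cap \left(ab \right)^{\ast}.
\end{eqnarray*}

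The heart of the argument is the purely language-theoretic claim $L_R = L_S$, which I would isolate as a lemma (the analogue of the ``$L_R = L_S$'' step in Theorem~\ref{teojosjdiojiosdfjs}). The inclusion $L_R \subseteq L_S$ is immediate since $ab$ is irreducible and lies in $(ab)^{\ast}$. For $L_S \subseteq L_R$: if $w \in (ab)^{\ast}$ and $\Omega(w) = 1$, write $w = (ab)^k$; then $w = (ab)(ab)^{k-1}$ exhibits $w$ as a concatenation of $k$ irreducible Dyck words, so $\Omega(w) = k$, forcing $k = 1$ and $w = ab$. (Alternatively: any nonempty word in $(ab)^{\ast}$ of length $> 2$ is reducible, being the concatenation of the proper nonempty Dyck prefix $ab$ with the rest.) This is elementary and should be only a few lines.

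With the lemma in hand, the rest is boilerplate matching the previous two proofs: by Lemmas~\ref{Lem89ru34899r834ur9} and \ref{propmmknnjknjknnbvvgcgcf} (and Definition~\ref{defju87fd97s7f7ds987f98sd7}) we get $R = \bigcup_{w \in L_R}\gamma^{-1}(w) \in \mathfrak{U}_{\textbf{KR}_2}$ and $S = \bigcup_{w \in L_S}\gamma^{-1}(w) \in \mathfrak{U}_{\textbf{KR}_2}$ for the $\gamma$ of $\textbf{KR}_2$; moreover $\gamma(R) = L_R \cap \mathcal{L}_2$ and $\gamma(S) = L_S \cap \mathcal{L}_2$, so $L_R = L_S$ gives $\gamma(R) = \gamma(S)$; by Definition~\ref{Defundnudsufbsubfuywwe89} the theorem is provable in $\textbf{KR}_2$, and Lemma~\ref{lemfundu3498ur9483ur983u9ru39} upgrades this to $R = S$ in \textbf{ZFC}.

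I do not expect a genuine obstacle here; the only point requiring a little care is making sure the membership $R, S \in \mathfrak{U}_{\textbf{KR}_2}$ is justified exactly as before — i.e. that $L_S$ as written is a bona fide language over $\{a,b\}$ and that every word appearing in $L_R \cup L_S$ actually has nonempty preimage under $\gamma$ (which holds: $ab = \llangle 1 \rrangle_2$ by Lemma~\ref{Lemj89jw9efw98j98je9}). Everything else is a transcription of the template already used twice in the paper, with the block-counting characterization of $2$-dense divisibility (Lemma~\ref{propmmknnjknjknnbvvgcgcf}, $\Omega = \blocks_\lambda$) doing the work that Lemma~\ref{Lemksjdlkjfkljsfl} did in Theorem~\ref{teojosjdiojiosdfjs}.
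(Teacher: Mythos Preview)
Your proposal is correct and follows essentially the same route as the paper: the paper introduces the auxiliary Lemma~\ref{propiojsirjsdoijfdoisjfdo} (just the $\blocks_\lambda(n)=1$ case of Lemma~\ref{propmmknnjknjknnbvvgcgcf}, exactly as you unpack it), sets $L_S = \mathcal{D}^{\textrm{irr}} \cap (ab)^\ast$, and dismisses $L_R = L_S$ with ``it easily follows'', whereas you spell out the one-line argument. The only small slip is that in your ``boilerplate'' paragraph you should also cite Lemma~\ref{Lemj89jw9efw98j98je9} to justify $R = \gamma^{-1}(ab)$, as the paper does; the two lemmas you list there pertain only to $S$.
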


We will use the following auxiliary results.
\begin{lemma}\label{propiojsirjsdoijfdoisjfdo}
Let $\lambda > 1$ be a real number. 
For any integer $n \geq 1$, we have that $\llangle n \rrangle_{\lambda}$ is irreducible (i.e. $\llangle n \rrangle_{\lambda} \in \mathcal{D}^{irr}$) if and only if $n$ is $\lambda$-densely divisible.
\end{lemma}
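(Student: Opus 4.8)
The plan is to reduce the statement to the free-monoid structure of $\mathcal{D}$ together with the dictionary already established between $\llangle n \rrangle_{\lambda}$ and the arithmetic of $n$. First I would recall that, by Lemma \ref{lempkjsljdf90sf90s8}, $\llangle n \rrangle_{\lambda} \in \mathcal{D}^{\textrm{sym}} \subseteq \mathcal{D}$, so $\llangle n \rrangle_{\lambda}$ is genuinely a Dyck word and $\Omega$ is defined on it. Since $(\mathcal{D}, \cdot)$ is freely generated by $\mathcal{D}^{\textrm{irr}}$, every Dyck word $w$ factors uniquely as a concatenation $w = g_1 g_2 \cdots g_k$ of irreducible Dyck words, and by construction $\Omega(w) = k$; in particular, for a non-empty Dyck word $w$ one has $w \in \mathcal{D}^{\textrm{irr}}$ exactly when $\Omega(w) = 1$.

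Next I would invoke Lemma \ref{propmmknnjknjknnbvvgcgcf}, which gives $\Omega\!\left(\llangle n \rrangle_{\lambda}\right) = \blocks_{\lambda}(n)$, together with Definition \ref{defju87fd97s7f7ds987f98sd7}, namely that $n$ is $\lambda$-densely divisible if and only if $\blocks_{\lambda}(n) = 1$. Combining these, $n$ is $\lambda$-densely divisible if and only if $\Omega\!\left(\llangle n \rrangle_{\lambda}\right) = 1$. It only remains to check that $\llangle n \rrangle_{\lambda} \neq \varepsilon$: the set $\bigcup_{d \mid n}[d, \lambda d]$ always contains the interval $[1, \lambda]$, so $\blocks_{\lambda}(n) \geq 1$ and hence $\Omega\!\left(\llangle n \rrangle_{\lambda}\right) \geq 1$; alternatively, $\lambda n \in \lambda D_n \setminus D_n$ shows directly that the symmetric difference defining $\llangle n \rrangle_{\lambda}$ is non-empty. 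Chaining the two equivalences then yields: $\llangle n \rrangle_{\lambda}$ is irreducible $\iff \Omega\!\left(\llangle n \rrangle_{\lambda}\right) = 1 \iff \blocks_{\lambda}(n) = 1 \iff n$ is $\lambda$-densely divisible.

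I do not expect a genuine obstacle here. The only point requiring a word of care is confirming that $\llangle n \rrangle_{\lambda}$ is non-empty, so that ``$\Omega = 1$'' and ``irreducible'' really coincide, and this is immediate. Everything else is a bookkeeping composition of results already available in the excerpt (Lemmas \ref{lempkjsljdf90sf90s8} and \ref{propmmknnjknjknnbvvgcgcf}, the defining property of $\blocks_{\lambda}$, and the freeness of $(\mathcal{D}, \cdot)$ over $\mathcal{D}^{\textrm{irr}}$).
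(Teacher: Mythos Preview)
Your proposal is correct and follows exactly the paper's approach: the paper's one-line proof simply says this is the special case $\blocks_{\lambda}(n)=1$ of Lemma~\ref{propmmknnjknjknnbvvgcgcf}, and you have spelled out precisely that reduction (plus the harmless verification that $\llangle n \rrangle_{\lambda}\neq\varepsilon$, which the paper leaves implicit).
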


\begin{proof}
It is the case corresponding to $\blocks(n) = 1$ in Lemma \ref{propmmknnjknjknnbvvgcgcf}.
\end{proof}

\begin{proof}[Proof of Theorem \ref{teo8u8439u98ur39ur93}]
Define the sets
\begin{eqnarray*}
R &:=& \left\{2^m: \quad m \in \mathbb{Z}_{\geq 0} \right\}, \\
S &:=& \left\{n \in \mathbb{Z}_{\geq 1}: \quad \begin{array}{c}
\left(n \textrm{ 2-densely divisible}\right) \textrm{ and} \\
\neg\left(n \textrm{ semi-perimeter of Pythagorean triangle}\right)
\end{array} \right\}.
\end{eqnarray*}

Consider the languages
\begin{eqnarray*}
L_R &=& \left\{ ab \right\}, \\
L_S &=& \left\{ w \in \mathcal{D}: \quad w \textrm{ irreducible} \right\} \cap \left(ab \right)^{\ast}.
\end{eqnarray*}

 In virtue of  Lemma \ref{Lemj89jw9efw98j98je9}, \ref{Lem89ru34899r834ur9} and \ref{propiojsirjsdoijfdoisjfdo},
$$
R = \bigcup_{w \in L_R} \gamma^{-1}(w)\in \mathfrak{U}_{\textbf{KR}_{2}} \textrm{ and } S = \bigcup_{w \in L_S} \gamma^{-1}(w)\in \mathfrak{U}_{\textbf{KR}_{2}},
$$
where $\gamma$ is from $\textbf{KR}_{2} = \left(\mathcal{U}, \Sigma, \gamma \right)$. Furthermore, $\gamma\left( R\right) = L_R \cap \mathcal{L}_{2}$ and $\gamma\left( S\right) = L_S \cap \mathcal{L}_{2}$.

It easily follows that $L_R = L_S$. So, $\gamma\left( R\right) = \gamma\left( S\right)$. By Definition \ref{Defundnudsufbsubfuywwe89}, ``$R = S$'' is provable in $\textbf{KR}_{2}$. Using Lemma \ref{lemfundu3498ur9483ur983u9ru39}, we conclude that $R = S$.
\end{proof}

\section{Conclusions}

In this paper we showed that some non-trivial elementary number-theoretical theorems are susceptible to be transformed into relationships among formal languages and then proved by rather trivial arguments from Language Theory.

\section*{Acknowledge}
The author thanks S. Brlek and  C. Reutenauer for they valuable comments and suggestions concerning this research.

%==============================================================================================================

\end{document}